\documentclass{amsart}

\usepackage{amscd}
\usepackage[shortlabels]{enumitem}
\usepackage[colorlinks=true, linkcolor=blue, citecolor=blue, urlcolor=blue]{hyperref}

\usepackage{amsthm}
\newtheorem{thm}[equation]{Theorem}
\newtheorem{lem}[equation]{Lemma}

\newtheorem{prop}[equation]{Proposition}

\newtheorem{qprop}[equation]{Quartic Theorem}

\theoremstyle{definition}
\newtheorem{defn}[equation]{Definition}
\newtheorem{eg}[equation]{Example}

\theoremstyle{remark}
\newtheorem{rmk}[equation]{Remark}

\newcommand{\R}{\mathbb{R}}
\newcommand{\Q}{\mathbb{Q}}
\newcommand{\C}{\mathbb{C}}
\newcommand{\Z}{\mathbb{Z}}

\newcommand{\Ddisc}{D_{\mathrm{disc}}}
\newcommand{\Dtri}{D_{\mathrm{tri}}}

\DeclareMathOperator{\car}{char}

\newcommand{\B}{\mathcal{B}}

\newcommand{\abs}[1]{\left| #1 \right|}

\newcommand{\rt}{\alpha}
\newcommand{\rtt}{\beta}
\newcommand{\limit}{\lambda}
\newcommand{\limitt}{\mu}

\newcommand{\gb}{\bar{g}}
\newcommand{\fb}{\bar{f}}
\newcommand{\bc}{\bar{c}}
\newcommand{\qb}{\bar{q}}
\newcommand{\pb}{\bar{p}}

\renewcommand{\:}{\colon}

\newcommand{\stacks}[1]{\cite[\href{https://stacks.math.columbia.edu/tag/#1}{Tag #1}]{stacks-project}}

\begin{document}

\title{Root of the generic cubic as a power series in the discriminant}
\date{\texttt{\today}}

\subjclass[2020]{12J05, 12E05, 11S05}
\keywords{generic polynomial, cubic, quartic, root, discriminant, valued
field, non-archimedean, Hensel's lemma}

\begin{abstract}
An observation of J-P.~Serre implies that the generic monic cubic polynomial,
unique among generic monic polynomials of degree at least two, has a root that
is a power series in the discriminant; Serre asked for a formula.  We give formulas
that work over any field with an absolute value and in every characteristic.
Over a complete non-archimedean field of residue characteristic different from 3 we identify the root intrinsically: it
is the isolated root, the one farthest from the others.  We also answer the
next case of Serre's question, computing explicitly the distinguished ramified
quadratic factor of the generic monic quartic.  The methods combine Hensel's
lemma, Lagrange inversion, and elementary non-archimedean analysis.
\end{abstract}

\author{Jason Bland}
\author{Skip Garibaldi}
\author{Joel Rosenberg}

\maketitle

\section{Introduction}

An observation of Jean-Pierre Serre implies that a generic cubic polynomial
has a root that can be written as a power series in the discriminant of the
polynomial, and that generic polynomials of higher degree do not.\footnote{Serre himself did not use this imprecise language.  We
make this imprecise statement precise in Proposition~\ref{serre.prop} and \eqref{serre.cubic}.}  In a private communication, he asked for a formula for that
root, which we provide.  We work throughout in the setting of polynomials over
a field with an absolute value, a more general setting than Serre's question,
which includes the generic case, the real and complex numbers, and $p$-adic fields.

\begin{thm} \label{n3.soln}
Let $f(t) = t^3 + pt + q \in C[t]$ for $C$ a field of characteristic different
from $3$ with an absolute value, and suppose $p, q \ne 0$.  Write $\Delta$ for
the discriminant of $f$.  If the series
\begin{equation} \label{disc.series}
  \sum_{n \ge 0} \binom{3n}{n}\left( \frac{-\Delta}{27 p^3} \right)^{\!n}
\end{equation}
converges unconditionally to $\limit \in C$, then
\[
  f(\rt) = 0 \quad\text{for}\quad \rt := \frac{3q}{p}\limit \in C.
\]
\end{thm}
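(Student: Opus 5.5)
The plan is to reduce the statement to a single identity of formal power series and then pass to the limit. Write $z := -\Delta/(27p^3)$. Since $\Delta = -4p^3 - 27q^2$, we have $z = \tfrac{4}{27} + \tfrac{q^2}{p^3}$, i.e.
\[
  27z - 4 = \frac{27q^2}{p^3}.
\]
Substituting $\rt = 3q\limit/p$ into $f$ gives
\[
  f(\rt) = \frac{27q^3}{p^3}\limit^3 + 3q\limit + q = q\bigl[(27z-4)\limit^3 + 3\limit + 1\bigr].
\]
So it suffices to show that $\limit$ is a root of the cubic $(27z-4)X^3 + 3X + 1$.

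\textbf{Step 1 (formal identity).} Let $B(z) = \sum_{n\ge0}\binom{3n}{n} z^n \in \Z[[z]]$. I will prove
\[
  (27z-4)B(z)^3 + 3B(z) + 1 = 0 \quad\text{in } \Z[[z]].
\]
Because the coefficients are integers, it is enough to prove this in $\Q[[z]]$; the identity then maps to $C[[z]]$ in every characteristic.

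\begin{itemize}
\item Introduce the ternary-tree series $g \in \Z[[z]]$ defined by $g = 1 + z g^3$.
\item By Lagrange inversion (or the standard ``central coefficient'' lemma), $B = 1/(1 - 3zg^2)$.
\item Eliminate $g$ between these two relations, using $z = (g-1)/g^3$. This gives $B = g/(3-2g)$.
\item Substitute this expression for $B$ directly into $(27z-4)B^3 + 3B + 1$ and check that it vanishes.
\end{itemize}

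As a fallback, one can instead verify that the coefficients of the left side vanish, using the recurrence $\binom{3n+3}{n+1} = \tfrac{3(3n+2)(3n+1)}{(2n+2)(2n+1)}\binom{3n}{n}$. The constant term checks immediately: $-4+3+1=0$.

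\textbf{Step 2 (evaluation).} I expect this analytic step to be the main obstacle. The goal is to show that evaluating at our $z \in C$ respects the ring operations, so that the partial sums of $(27z-4)B^3 + 3B + 1$ converge to $(27z-4)\limit^3 + 3\limit + 1$.

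I would pass to the completion $\widehat C$, where the identity can be evaluated, and then split into two cases.

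\begin{itemize}
\item \emph{Archimedean case.} By Ostrowski, $\widehat C$ is $\R$ or $\C$, and there unconditional convergence is equivalent to absolute convergence. Mertens' theorem (in fact absolute convergence of Cauchy products) then shows that $B(z)^2$ and $B(z)^3$ converge to $\limit^2$ and $\limit^3$.
\item \emph{Non-archimedean case.} Unconditional convergence means $\binom{3n}{n}z^n \to 0$. Then every coefficient of the Cauchy product $\sum_{i+j=n} a_i a_j$ is bounded by $\max_{i+j=n}\abs{a_i}\abs{a_j}$, which tends to $0$. The product series therefore converges, and to the product of the limits.
\end{itemize}

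In either case, evaluating the formal identity at $z$ gives $(27z-4)\limit^3 + 3\limit + 1 = 0$ in $\widehat C$. Both sides lie in $C$ because $\limit \in C$, so the equation holds in $C$, and hence $f(\rt) = 0$ with $\rt \in C$. The point needing care is that $\limit$ is assumed to lie in $C$ while the series argument takes place in $\widehat C$. This causes no difficulty, since the final equation is polynomial in $\limit$ and $z$.
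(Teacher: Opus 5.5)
Your proposal is correct and is essentially the paper's proof. You use the same formal identity $(4-27z)\,s(z)^3 = 1 + 3s(z)$, obtained by substituting $s = \B_3/(3-2\B_3)$ and $z = (\B_3-1)/\B_3^3$, and you evaluate at $z=-\Delta/(27p^3)$ exactly as Proposition~\ref{formal} does (completion, Ostrowski, Cauchy products). The differences are cosmetic: you derive $s = \B_3/(3-2\B_3)$ by Lagrange inversion instead of citing it, and you factor $f(\rt) = q\bigl[(27z-4)\limit^3 + 3\limit + 1\bigr]$ rather than dividing by $4-27d$, which incidentally makes the hypothesis $q \neq 0$ unnecessary.
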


We call the series \eqref{disc.series} the \emph{discriminant series} and the
root $\rt$ the \emph{discriminant root}.  Although stated for a depressed
cubic --- one with no quadratic term --- the theorem also yields a root of a
general cubic, see \eqref{n3.nondepressed}.  In the generic case (made precise
in Definition~\ref{generic.case}) the hypothesis on convergence is
automatically satisfied, so Theorem~\ref{n3.soln} answers Serre's question
whenever $\car C \ne 3$.

The denominator $27$ in \eqref{disc.series} shows the formula cannot survive
into characteristic $3$, and the obstruction is real: Example~\ref{3.eg}
exhibits a depressed cubic over a characteristic-$3$ field with \emph{no} root
expressible as a power series in the discriminant.  We therefore give a
separate characteristic-$3$ formula, stated for non-depressed cubics, in
Theorem~\ref{3.soln}.

\subsection*{The non-archimedean characterization}
Theorem~\ref{n3.soln} produces a root by a formula, but does not exhibit how the root is special.  Our second main result, Theorem \ref{dist.root.thm}, shows that, over a complete non-archimedean field of residue characteristic $\ne 3$, the discriminant root is the \emph{isolated} root, the one farthest from the others.  This result may be contrasted with the archimedean case $C = \R$, treated in \cite{LongRoot}, where the discriminant root is the root that is greatest in absolute value, whenever such a root exists.  That paper works entirely over the real numbers and makes use of real analytic tools such as hypergeometric functions, whereas this paper is fundamentally algebraic and combinatorial.

\subsection*{Quartics}
Serre's statement about generic cubics is the $n = 3$ instance of a statement
about generic monic polynomials of every degree $n \ge 2$
(Proposition~\ref{serre.prop}): each has a distinguished ramified quadratic
factor $r(t)$, and for each $n$ there is a computational problem of writing
$r(t)$ down.  Theorem~\ref{n3.soln} solves the $n = 3$ case (where $r(t) =
f(t)/(t - \rt)$).  Our third main result, Theorem~\ref{quartic.prop}, solves
the $n = 4$ case: we compute the ramified quadratic factor of the generic
monic quartic explicitly, determining in particular the signs that fix the
square-root ambiguities.

\subsection*{Note on terminology}
The phrase ``power series in the discriminant'' is used loosely in this paper.
For the series \eqref{disc.series} it is apt: \eqref{disc.series} is the value
at $z = -\Delta/(27p^3)$ of a formal power series with integer coefficients,
involving only non-negative powers of $\Delta$.  The phrase is looser in Serre's
setting (Proposition~\ref{serre.prop} and \eqref{serre.cubic}), where each
element of the closed unit disk can be written as $\sum_{n \ge 0} a_n \Delta^n$
but, as Definition~\ref{generic.case} explains, not uniquely. 

\section{The discriminant} \label{disc.sec}
Let $k$ be a field and consider the polynomial ring in $n$ indeterminates,
$k[c_1, \ldots, c_n]$, and the \emph{generic monic polynomial} $f = t^n + c_1
t^{n-1} + c_2 t^{n-2} + \cdots + c_n$.  We can embed $k[c_1, \ldots, c_n]$ into
$k[x_1, \ldots, x_n]$ where the $x_i$ are also indeterminates via
\(
c_i \mapsto (-1)^i e_i(x_1, \dots, x_n)
\)
where $e_i$ denotes the degree $i$ elementary symmetric polynomial; this
identifies the $c_i$ with the coefficients of $\prod_j (t - x_j)$.  The
symmetric group $S_n$ on $n$ letters acts on $k[x_1, \ldots, x_n]$ by
permuting the $x_j$, and the Fundamental Theorem of Symmetric Polynomials says
that the subring of fixed elements is $k[c_1, \ldots, c_n]$.

Define $\delta := \prod_{1 \le i < j \le n} (x_i - x_j) \in k[x_1, \ldots,
x_n]$ and $\Delta := \delta^2$.  Because it is fixed under permutations of the
$x_j$, $\Delta$ belongs to $k[c_1, \ldots, c_n]$.  For example, in the case $n
= 3$ we have:
\begin{equation} \label{disc.eq}
\Delta = c_1^2 c_2^2 - 4c_2^3 - 4c_1^3 c_3 + 18 c_1 c_2 c_3 - 27 c_3^2 \quad \in k[c_1, c_2, c_3].
\end{equation}
This $\Delta$ is the discriminant of the generic monic polynomial $f$.  
One
can use it to define the discriminant for any field $K$ and any monic polynomial $g \in K[t]$: it is the image of $\Delta$ in $K$
under the homomorphism that sends $c_i$ to the coefficient of $t^{n-i}$ in $g$.

In case the characteristic of $k$ is 2, $\delta$ is preserved by permuting the
$x_j$, so $\delta$ itself belongs to $k[c_1, \ldots, c_n]$.  For example, when
$n = 3$ and $\car k = 2$, we have:
\begin{equation} \label{2.disc}
\Delta = \delta^2 \quad \text{for} \quad \delta = c_1 c_2 + c_3.
\end{equation}
In summary, for $n \ge 2$, the element
\begin{equation} \label{pi.def}
\pi := \begin{cases} \Delta & \text{if $\car k \ne 2$;} \\
\delta & \text{if $\car k = 2$}
\end{cases}
\end{equation}
belongs to $k[c_1, \ldots, c_n]$.

The following is standard.  We provide a proof for lack of an ideal reference.

\begin{lem} \label{pi.prime}
    $\pi$ is an irreducible element of $k[c_1, \ldots, c_n]$.
\end{lem}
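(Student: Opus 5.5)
Write $A := k[c_1,\dots,c_n]$ and $B := k[x_1,\dots,x_n]$, so that $A = B^{S_n}$ and $B$ is a UFD. The plan is to factor $\pi$ in $B$ into linear forms and then use the $S_n$-action to show that no nontrivial factorization can already live in $A$.

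First, note that $\pi$ is not a unit, since it is a nonconstant polynomial. Now suppose $\pi = ab$ with $a,b \in A$. In $B$ the element $\pi$ factors into the irreducible linear forms $x_i - x_j$ ($i<j$), and these are pairwise non-associate. In characteristic $\ne 2$ each appears with multiplicity two, since $\pi = \Delta = \delta^2$. In characteristic $2$ each appears with multiplicity one, since $\pi = \delta$.

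By unique factorization in $B$, the element $a$ is a unit times a product $\prod_{i<j}(x_i-x_j)^{m_{ij}}$ with $0 \le m_{ij} \le e$, where $e = 2$ or $1$ is the multiplicity above. Since $a \in A$, it is fixed by $S_n$. The group $S_n$ acts transitively on the set of unordered pairs $\{i,j\}$, and the forms $x_i - x_j$ and $x_j - x_i$ are associates. Therefore the multiplicity function $\{i,j\} \mapsto m_{ij}$ is $S_n$-invariant, hence constant, say equal to $m$.

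The remaining point is to rule out $m = 1$ when $\car k \ne 2$. In that case $a = u\delta$ for some $u \in k^\times$. The transposition $(1\,2)$ sends $\delta$ to $-\delta$: it negates the factor $x_1 - x_2$ and permutes the other factors, contributing an even number of further sign changes. Since $-\delta \ne \delta$ when $\car k \ne 2$, we get $a \notin A$, a contradiction. Hence $m \in \{0, e\}$. If $m = 0$ then $a$ is a unit; if $m = e$ then $b$ is a unit. So $\pi$ is irreducible in $A$.

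The main obstacle is the sign bookkeeping in the last step, namely checking that $(1\,2)$ sends $\delta$ to exactly $-\delta$. This is the standard fact that $\delta$ is the Vandermonde alternant, and it is where the hypothesis $\car k \ne 2$ enters.
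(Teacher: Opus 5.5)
Your proof is correct and follows essentially the same route as the paper. Both arguments factor $\pi$ into the pairwise non-associate primes $x_i - x_j$ of $k[x_1,\dots,x_n]$, use $S_n$-invariance and transitivity on pairs to force a constant exponent, and then, when $\car k \ne 2$, rule out exponent $1$ because a transposition sends $\delta$ to $-\delta$.
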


\begin{proof}
Suppose $\pi = gh$ with $g, h \in k[c_1, \dots, c_n]$. Write $\pi = \prod_{i<j}
(x_i - x_j)^{\varepsilon}$ with $\varepsilon = 2$ or $1$ according to the
characteristic.  The polynomials $x_i - x_j$ are pairwise non-associate primes
of the unique factorization domain $k[x_1, \dots, x_n]$, whose units are
$k^\times$. Hence
\[
g = u \prod_{i<j} (x_i - x_j)^{a_{ij}},
\qquad
u \in k^\times, \quad 0 \le a_{ij} \le \varepsilon.
\]
Each $\sigma \in S_n$ fixes $g$ and permutes the primes $x_i - x_j$ up to
sign, so the exponent function $(i,j) \mapsto a_{ij}$ is constant on
$S_n$-orbits of pairs; as $S_n$ acts transitively on pairs, $a_{ij} = a$ for
all $i < j$, with $0 \le a \le \varepsilon$.

If $a = 0$ then $g \in k^\times$, so $g$ is a unit; if $a = \varepsilon$ then
likewise $h$ is a unit. In characteristic $2$ ($\varepsilon = 1$) there are no
other cases, and $\pi = \delta$ is irreducible. In characteristic $\ne 2$
($\varepsilon = 2$) the remaining case is $a = 1$, i.e., $g = u \prod_{i<j}
(x_i - x_j)$. But for $\tau$ a transposition, $\tau$ fixes $g$ while
$\tau\bigl(\prod_{i<j} (x_i - x_j)\bigr) = -\prod_{i<j} (x_i - x_j)$, forcing
$2g = 0$, a contradiction. Hence $\pi$ is irreducible.
\end{proof}

Because $\pi$ is prime, we may form the complete local ring $A$ obtained by
localizing $k[c_1, \ldots, c_n]$ at the ideal $(\pi)$ and completing; its
maximal ideal is generated by $\pi$ \stacks{05GH}.  Write $C$ for the fraction
field of $A$.  We will see in \autoref{abs.sec} that $A$ and $C$ carry a natural
absolute value, the closed unit disk of which is $A$; for now we use only that
$A$ is a complete local domain with maximal ideal $(\pi)$ and residue field
$\kappa := A/(\pi)$.

\section{A question for cubics and every higher degree} \label{serre.obs}

Serre's observation about cubic polynomials is a corollary of a more general
statement, that ``generic'' monic polynomials of degree $n \ge 2$ have a
distinguished quadratic factor.  We continue the notation of the previous
section.  

A monic polynomial $r \in A[t]$ that is irreducible over $C$ is called \emph{unramified} if its reduction $\bar{r} \in \kappa[t]$ is separable, otherwise $r$ is \emph{ramified}.
(In
\autoref{abs.sec} we equip $C$ with an absolute value, and one checks that a quadratic $r$
is ramified in the present sense exactly when the difference of its two roots
has absolute value less than $1$, the usual meaning of ramification for the
extension $C[t]/(r)$; see \cite[\S5.1, A]{Rib}.)

For the following result, see p.~56 of \cite{GMS} or the slightly different
presentation of the same material on p.~85 of \cite{SeciV}.

\begin{prop} \label{serre.prop}
There exist unique monic polynomials $r(t), u(t) \in A[t]$ such that $f(t) =
r(t) u(t)$; $\deg r(t) = 2$; $r$ is irreducible in $C[t]$ and ramified; and $u$
is unramified and irreducible (if $n > 2$) or $1$ (if $n = 2$) in $C[t]$.
\end{prop}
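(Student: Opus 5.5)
The plan is to reduce modulo $\pi$, factor the reduction $\fb \in \kappa[t]$, and lift that factorization to $A[t]$ with Hensel's lemma; $A$ is a complete local ring, so Hensel's lemma applies to factorizations into coprime monic factors.

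The first step is to understand $\fb$. The residue field $\kappa = A/(\pi)$ is the fraction field of $k[c_1,\dots,c_n]/(\pi)$, since localizing at the prime $(\pi)$ and completing do not change the residue field. Geometrically, $\pi = 0$ is the locus of polynomials with a repeated root. The generic point of this hypersurface is the polynomial $(t-y_1)^2(t-y_2)\cdots(t-y_{n-1})$ with $y_i$ independent indeterminates. I would make this concrete by exhibiting a map $k[c_1,\dots,c_n] \to k(y_1,\dots,y_{n-1})$ with kernel $(\pi)$: the kernel contains $\pi$, both sides have dimension $n-1$, and $(\pi)$ is prime by Lemma~\ref{pi.prime}. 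The fraction field of the image is $k(e(\dots))$, generated by the coefficients of that polynomial, and this is $\kappa$. Over $\kappa$ we then have $\fb = (t-\bar y_1)^2\,\bar u(t)$. Here $\bar y_1 \in \kappa$, because it is the unique repeated root, hence fixed by every automorphism; in characteristic $p$ one must also rule out inseparability, using that $\bar y_1$ is a rational function of the coefficients, for example via the gcd of $\fb$ and $\fb'$. The factor $\bar u = \prod_{i\ge2}(t-\bar y_i)$ is irreducible and separable over $\kappa$, since $y_2,\dots,y_{n-1}$ are permuted transitively by the symmetry group, and it is coprime to $(t-\bar y_1)^2$.

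The second step is the lift. Hensel's lemma gives unique monic $r,u \in A[t]$ with $\bar r = (t-\bar y_1)^2$ and reduction of $u$ equal to $\bar u$. Then $u$ is unramified, and it is irreducible over $C$: a factorization over $C$ would have factors in $A[t]$ by Gauss's lemma for the UFD, or valuation ring, $A$, and would therefore reduce to a factorization of the irreducible $\bar u$.

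The third step, which I expect to be the main obstacle, is that $r$ is irreducible over $C$. Write $r = t^2 + at + b$. If $r$ split over $C$, its roots would lie in $A$, which is normal, and both would reduce to $\bar y_1$. Then $f$ would have a root $\rt \in A$. I would rule this out by showing that $\operatorname{disc}(r)$ has odd valuation; in characteristic $2$ the analogue is that $\delta$ is a uniformizer. Since $\operatorname{disc}(f) = \operatorname{disc}(r)\operatorname{disc}(u)\operatorname{Res}(r,u)^2$ and the last two factors are units, the valuation of $\operatorname{disc}(r)$ equals that of $\Delta = \pi$, which is $1$. A square in $C$ has even valuation, so $r$ is irreducible. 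In characteristic $2$ I would instead apply an Artin--Schreier substitution argument using $\delta$. Ramification holds by construction, since $\bar r$ is inseparable.

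Uniqueness follows from the same analysis. Any factorization $f = r'u'$ as in the statement reduces to $\fb = \bar r'\,\bar u'$. Because $r'$ is ramified, $\bar r'$ is inseparable, and since $\fb$ has $(t-\bar y_1)^2$ as its only repeated factor, this forces $\bar r' = (t-\bar y_1)^2$. The uniqueness clause of Hensel's lemma then gives $r' = r$ and $u' = u$.
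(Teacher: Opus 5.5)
Away from characteristic $2$ your argument is complete. It is also a genuinely self-contained route: the paper does not prove Proposition~\ref{serre.prop}, and instead cites \cite{GMS} and \cite{SeciV}. Identifying $\kappa$ through the generic point, Hensel-lifting the coprime factorization, and the count $v(\operatorname{disc} r)=v(\Delta)=1$ all work.

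The gap is in characteristic $2$, where your claim $\bar{y}_1\in\kappa$ is false. Invariance under automorphisms only shows that $\bar{y}_1$ is purely inseparable over $\kappa$, and the gcd does not rescue it. The cross term $2(t-\bar{y}_1)\bar{u}$ in $\bar{f}'$ vanishes, so $\gcd(\bar{f},\bar{f}')=(t-\bar{y}_1)^2=t^2+\bar{y}_1^2$, which only gives $\bar{y}_1^2\in\kappa$. In fact $\kappa$ is generated by the coefficients of $(t^2+y_1^2)\prod_{i\ge2}(t-y_i)$. Hence $\kappa\subseteq k(y_1^2,y_2,\dots,y_{n-1})$, which does not contain $y_1$. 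Concretely, for $n=2$ one has $\pi=\delta=c_1$, $\kappa=k(\bar{c}_2)$ and $\bar{f}=t^2+\bar{c}_2$. For $n=3$ this is the paper's own computation in \S\ref{why.n3.sec}: the quadratic cofactor reduces to $t^2+\bar{p}$ with $\bar{p}$ not a square in $\kappa$. So the double root of $\bar{f}$ is purely inseparable of degree $2$ over $\kappa$, not rational.

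This undermines your third step in characteristic $2$, since the intended Artin--Schreier substitution would begin by lifting $\bar{y}_1$ to $A$, which cannot be done. Suppose you carry the substitution out correctly. Write $r=t^2+at+b$, so $v(a)=v(\delta)=1$ and $\bar{b}=\bar{y}_1^2\neq0$. Then $t=as$ gives $r=a^2(s^2+s+b/a^2)$ with $v(b/a^2)=-2$. The decisive input is then that $\bar{b}$ is \emph{not} a square in $\kappa$, which is the opposite of your claim.

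The repair is easy and makes the third step trivial there. Set $\bar{r}:=t^2+\bar{y}_1^2\in\kappa[t]$; it is coprime to $\bar{u}=\bar{f}/\bar{r}\in\kappa[t]$. Hensel-lift as in your second step. Then observe that $\bar{r}$ is irreducible over $\kappa$, so a splitting of $r$ over $C$ would, by Gauss's lemma, give monic linear factors in $A[t]$ whose reductions split $\bar{r}$ over $\kappa$. Ramification holds because $\bar{r}$ is inseparable. Your uniqueness argument goes through unchanged once $\bar{y}_1$ is read in an algebraic closure of $\kappa$.
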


The intuition behind the proposition is that a generic polynomial has distinct
roots, but if we impose the extra condition that $\Delta = 0$ (which is
effectively what happens when we pass to the reduction $\bar f \in \kappa[t]$)
then generically it has exactly two roots that are equal and the rest pairwise
distinct.  The polynomial $r$ encodes the lifts of the two roots of $\bar f$
that are equal.

The proposition raises a general computational question:
\begin{equation} \label{serre.q}
\text{\emph{For each $n \ge 2$, what is the formula for the ramified quadratic factor $r(t)$?}}
\end{equation}
The case $n = 2$ is trivial, because $r(t) = f(t)$ for straightforward reasons
of the degrees of the factors; the irreducibility of $r(t)$ in this case
amounts to the discriminant not being a square, see Example~\ref{deg2no}.

In case $n = 3$, Theorem~\ref{n3.soln} or \ref{3.soln} provides a root $\rt$ of
$f(t)$, and then $r(t) = f(t)/(t - \rt)$ since $t - \rt$ and the irreducible
quadratic $r(t)$ both divide $f(t)$.  Thus the $n = 3$ case of \eqref{serre.q}
is a reformulation of the question raised in the introduction; we take it up,
in the more general setting of an arbitrary field with an absolute value, in
Sections~\ref{abs.sec}--\ref{deg3no.sec}.  The case $n = 4$ is the subject of
\autoref{quartic.sec}.

\section{Why the generic monic cubic has a distinguished root (\texorpdfstring{$\car \ne 3$}{char not 3})} \label{why.n3.sec}

The previous section stated Serre's Proposition~\ref{serre.prop} for generic
monic polynomials of all degrees.  We now give a self-contained proof of its
cubic consequence \eqref{serre.cubic}, via Hensel's lemma, computing the
distinguished root's constant term along the way.  We continue the notation of sections \ref{disc.sec} and
\ref{serre.obs}, so $A$ is the complete local domain with maximal ideal
$(\pi)$ and fraction field $C$.

\subsection*{Cubics}
In this language and for $n = 3$, Serre proved:
\begin{equation} \label{serre.cubic}
\parbox{0.85\textwidth}{\emph{The polynomial $t^3 + c_1 t^2 + c_2 t + c_3 \in C[t]$ has exactly one root in $C$.}}
\end{equation}
Something slightly stronger is true.  The coefficients of $f(t) := t^3 + c_1
t^2 + c_2 t + c_3$ lie in $k[c_1, c_2, c_3] \subseteq A$, and since $f$ factors
into a quadratic and a linear factor over $C$, an analog of Gauss's Lemma
(\cite[\S3.1, B]{Rib}) shows it does so over $A$; that is, \emph{$f$ has a root
in $A$}.  

If $\car k \ne 2$, then $\Delta$ is prime in $A$, hence not a square
in $A$ nor in $C$, so the roots $x_1, x_2, x_3$ cannot all lie in $C$; and if
two did, the third $x_3 = -c_1 - x_1 - x_2$ would too, forcing $\delta \in C$, a
contradiction.  So $f$ has at most one root in $C$.

We now give a proof of \eqref{serre.cubic}.  For ease of exposition, we
suppose that $\car k \ne 3$ for the rest of this section.  We will treat the
case of characteristic 3 in \autoref{deg3no.sec}.

\subsection*{Depressing the cubic}
We may substitute $t \mapsto t - c_1/3$ to obtain
\begin{equation} \label{depressed}
g(t) := f(t - c_1/3) = t^3 + pt + q
\end{equation}
for
\begin{equation} \label{pq.def}
p = c_2 - c_1^2/3 \quad \text{and}  \quad q = 2c_1^3/27 - c_1 c_2/3 + c_3.
\end{equation}
A cubic polynomial with no $t^2$ term, such as $g(t)$ from \eqref{depressed},
is called \emph{depressed}. If we have a formula for one of the roots of $g$,
we trivially obtain one also for the original $f$.  Moreover, it follows from
the definition of discriminant that the two polynomials have the same
discriminant, namely
\begin{equation} \label{n23.disc}
\Delta = -4p^3 - 27q^2.
\end{equation}

\subsection*{Proof \texorpdfstring{of \eqref{serre.cubic}}{} when \texorpdfstring{$\car k \ne 2, 3$}{char not 2, 3}}
Put $\kappa := A/(\pi)$, a field.  The image $\gb$ of $g$ under $A[t] \mapsto
\kappa[t]$ is a cubic polynomial whose discriminant is zero.  Similarly, we
will use a bar to denote the image of an element of $A$ in $\kappa$.

If we assume additionally that $\car k \ne 2$ and set $u := -3\qb/2\pb$, then
$\tfrac{8\pb^3}{\qb}\gb(u) = \bar\Delta = 0$ and $-4\pb^2\gb'(u) = \bar\Delta =
0$ in $\kappa$, so $u$ is a repeated root of $\gb$.  Because $g$ is depressed,
the three roots sum to zero, so $-2u = 3\qb/\pb$ is the remaining root and it is simple, since $u \ne 0$ gives $-2u \ne u$.  We have:
\[
\gb(t) = (t - 3\qb/\pb) (t + 3\qb/2\pb)^2.
\]
  As $A$ is complete, Hensel's
Lemma (\stacks{04GM} or \cite[\S{III.4.3}]{Bou:ac}) lifts it to a simple root
$\rt \in A$ of $g$ with $\rt \equiv 3q/p \bmod{\pi}$, verifying
\eqref{serre.cubic} when $\car k \ne 2, 3$.

\subsection*{Proof \texorpdfstring{of \eqref{serre.cubic}}{} when \texorpdfstring{$\car k = 2$}{char k = 2}}
In case $\car k = 2$, we have $\pi = q$.  Inspired by the previous
calculations, we consider $q/p$ and find $g(q/p) = q^3/p^3 \in (q)$ and
$g'(q/p) = p + q^2/p^2 \notin (q)$, so by Hensel's Lemma there is a unique root
$\rt$ of $g$ in $A$ with $\rt \equiv q/p \bmod{\pi}$.  It is the only root of
$g$ in $A$: writing $g(t) = (t - \rt) Q(t)$ with $Q(t) = t^2 + \rt t + (\rt^2 +
p)$, the image $\bar Q = t^2 + \pb$ in $\kappa[t]$ has $\pb = \bc_2 + \bc_1^2$,
which is not a square in $\kappa$ (as $\partial \pb/\partial \bc_2 = 1 \ne 0$),
so $\bar Q$, hence $Q$, has no root in $A$.  Since $g, t -\alpha \in A[t]$, so is $Q$, and by Gauss's Lemma $Q$ has no root in $C$.

\section{Absolute values} \label{abs.sec}

With \eqref{serre.cubic} in mind, Serre asked for an explicit formula for the
distinguished root of the generic cubic.  We answer this in a more general setting, that of a
field $C$ equipped with an \emph{absolute value}
$\abs{\cdot}\: C \to \R_{\ge 0}$, that is, a map with $\abs x = 0$ iff $x = 0$,
$\abs{xy} = \abs x\,\abs y$, and $\abs{x + y} \le \abs x + \abs y$.  Such a $C$
is a metric space under $\abs{x - y}$, and is \emph{complete} if every Cauchy
sequence converges.  Every $C$ may be completed to a field \cite[\S1.5]{Rib}.
The absolute value is \emph{non-archimedean} if $\abs{n \cdot 1_C} \le 1$ for
all $n \in \Z$, in which case it satisfies the ultrametric inequality
$\abs{x + y} \le \max\{\abs x, \abs y\}$ \cite[\S1.2, E]{Rib}; then the closed
unit disk $A = \{\abs x \le 1\}$ is a local ring with maximal ideal the open unit
disk $I = \{\abs x < 1\}$.

\begin{eg} \label{pi.adic}
Given a prime element $\pi$ in a unique factorization domain $D$, the $\pi$-adic
valuation $v_\pi(d)$ (the exponent of $\pi$ in $d$, with $v_\pi(0) = \infty$)
yields a non-archimedean absolute value $\abs{x} := 2^{-v_\pi(x)}$ on the
fraction field $K$ of $D$.  Completing $K$ gives a field $C$ whose closed unit
disk $A$ is a complete local ring with maximal ideal $(\pi)$.
\end{eg}

\begin{defn} \label{generic.case}
The \emph{generic case} is Example~\ref{pi.adic} with $D := k[c_1, \ldots,
c_n]$, $k$ a field, and $\pi$ as in \eqref{pi.def} (the discriminant when $\car
k \ne 2$); here $n = 3$ for cubics and $n = 4$ for quartics.  The resulting $A$
and $C$ are those of \autoref{serre.obs}.  Each $a \in A$ can be written
non-uniquely as $a = \sum_{n \ge 0} a_n \pi^n$ with each $a_n \in K$ free of
$\pi$, which is why we speak loosely of a ``power series in the discriminant.''
The expression is far from unique: for $j > i \ge 0$ and  $\pi$-free $d \in
K$, replacing $a_i \mapsto a_i - d\pi^{j-i+1}$ and $a_j \mapsto a_j + d \pi$ leaves
$a$ unchanged.  
\end{defn}

\begin{eg}[generic monic quadratic] \label{deg2no}
For the generic quadratic $f(t) = t^2 + c_1 t + c_2$ over $K = k(c_1, c_2)$
with $\car k \ne 2$, the discriminant $c_1^2 - 4c_2$ is not a square in $C$, so
$f$ has no root in $C$: there is no root that is a power series in the
discriminant.
\end{eg}

\begin{eg}[characteristic 3] \label{3.eg}
Some depressed cubics have no root expressible as a power series in the
discriminant.  Apply Example~\ref{pi.adic} with $D := k[p, q]$, $k$ of
characteristic $3$, and $\pi := p$.  We claim
\begin{equation} \label{3.eg.1}
\text{\emph{$f(t) := t^3 + pt + q \in C[t]$ has no root in $C$.}}
\end{equation}
If it had a root in $C$, then since $p, q \in A$, $f$ would have a root $\rt \in A$.  Reducing modulo $\pi = p$ --- under the
identification $A/(\pi) \cong k(q)$, where $\bar f = t^3 + q$ --- would give
a cube root of $-q$  in $k(q)$, which does not exist.  Since $\Delta = -p^3$ by
\eqref{3.disc}, a power series in $\Delta$ here is a power series in $p$, hence
an element of $C$; so this $f$ is the desired example.
\end{eg}

\subsection*{Series}
Convergence of sequences and series in $C$ is defined as over $\R$.  In case $C$ is 
non-archimedean and complete, $\sum_n c_n$ converges iff $\abs{c_n} \to 0$
\cite[\S1.1.8, Prop.~1]{BGR}, and every convergent series converges
\emph{unconditionally} --- that is, every rearrangement converges to the same
sum \cite[\S1.1.8, Cor.~4]{BGR}.  Over $\R$ or $\C$, unconditional convergence
is the same as absolute convergence, and the radius of convergence of a power
series is governed by the root test as usual.  The following lets us prove an
identity in a formal power-series ring and then evaluate it in $C$.  We write
$a(c)$ or $a(z)\vert_{z=c}$ for the result of substituting $c \in C$ into $a(z) = \sum_n a_n z^n \in
\Z[[z]]$.

\begin{prop} \label{formal}
Let $h \in \Z[x_1, \ldots, x_s]$ and $a_1, \ldots, a_s \in \Z[[z]]$.  If $c \in
C$ is such that every $a_i(c)$ converges unconditionally in $C$, then the 
series obtained by plugging $c$ into $h(a_1(z), \ldots, a_s(z))$ converges unconditionally and its sum is $h(a_1(c), \ldots, a_s(c))$.
\end{prop}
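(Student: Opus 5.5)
Since $h$ is a polynomial with integer coefficients, it is built from the variables $x_i$ and the constant $1$ by finitely many additions and multiplications, together with multiplication by integers. I will therefore argue by structural induction on an expression for $h$. The class of power series $b \in \Z[[z]]$ for which $b(c)$ converges unconditionally is closed under sums and products, and the evaluation map $b \mapsto b(c)$ is a ring homomorphism on that class. Granting this, the statement follows at once: each $a_i(c)$ converges unconditionally by hypothesis, the constants are trivial, and we repeatedly apply closure and multiplicativity to reach $h(a_1,\dots,a_s)$ evaluated at $c$.

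Sums are easy. If $b = \sum b_n z^n$ and $d = \sum d_n z^n$ have $b(c)$ and $d(c)$ converging unconditionally, then the termwise sum $\sum (b_n+d_n)c^n$ converges unconditionally to $b(c)+d(c)$. Indeed, for any bijection $\sigma$ of $\mathbb{N}$, the rearranged partial sums are sums of rearranged partial sums of the two series, each of which converges to the right value. Integer scalar multiples are handled the same way.

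The main step is products. The coefficient of $z^n$ in $bd$ is $\sum_{i+j=n} b_i d_j$, and I must show that $\sum_n \bigl(\sum_{i+j=n} b_i d_j\bigr) c^n$ converges unconditionally to $b(c)d(c)$. I will split into the two cases from the discussion of series above.
\begin{itemize}
\item \emph{Non-archimedean and complete.} Convergence is equivalent to $\abs{b_n c^n}\to 0$ and $\abs{d_n c^n}\to 0$. The ultrametric inequality gives $\abs{\sum_{i+j=n} b_i d_j c^n} \le \max_{i+j=n} \abs{b_i c^i}\,\abs{d_j c^j}$, and this tends to $0$, so the product series converges, and hence converges unconditionally. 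The value is identified by the standard Cauchy product argument: the difference between $\sum_{n\le N}$ and $\bigl(\sum_{i\le N} b_i c^i\bigr)\bigl(\sum_{j\le N} d_j c^j\bigr)$ is a sum of terms with $i+j>N$ and $i,j\le N$, each of which is small in absolute value, so the ultrametric bound sends the difference to $0$.
\item \emph{General (archimedean, or not complete).} I would reduce to absolute convergence. If $C$ is archimedean, Ostrowski's theorem embeds $C$ isometrically, up to a power of the absolute value, into $\C$, where unconditional convergence equals absolute convergence; Mertens'/Cauchy's theorem then gives absolute convergence of the Cauchy product to the product of the sums. If $C$ is non-archimedean but incomplete, I pass to the completion $\hat C$, apply the previous case, and note that the limit $b(c)d(c)$ already lies in $C$.
\end{itemize}

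The main obstacle I expect is this bookkeeping across the different kinds of fields, above all justifying that unconditional convergence in $C$ is preserved and reflected when passing to $\hat C$ or $\C$. The comparison itself is straightforward, since convergence to a point of $C$ is a metric statement and the embedding is isometric (up to a power). Once multiplicativity and additivity are in hand, the induction over the expression for $h$ finishes the proof.
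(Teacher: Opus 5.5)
Your proposal is correct and follows the paper's proof essentially step for step. Like the paper, you induct on the construction of $h$ with only the product case needing work, reduce to complete $C$, use Ostrowski plus the classical Cauchy product theorem for absolutely convergent series in the archimedean case, and use the ultrametric estimate on the Cauchy product's terms and tails in the non-archimedean case; you just spell out more of the routine details (the closure-class form of the induction and the passage to the completion) than the paper's sketch does.
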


\begin{proof}
By induction on the construction of $h$ it suffices to treat integer scaling,
addition, and a product of two terms; only the product needs argument, namely
the Cauchy product theorem for unconditionally convergent series, and we may
assume $C$ complete.  If the absolute value is archimedean, then by Ostrowski's
Theorem \cite[\S1.6, S]{Rib} we may take $C = \R$ or $\C$, where unconditional
convergence is absolute convergence and this is classical.  If it is
non-archimedean, convergence means terms tending to $0$, and the ultrametric
inequality shows the tails of the Cauchy product tend to $0$.
\end{proof}

\section{Generalized binomial series} \label{gbs.sec}

A single family of power series underlies all three of our formulas: the
discriminant series of Theorem~\ref{n3.soln}, the characteristic-$3$ series of
Theorem~\ref{3.soln}, and the trinomial series of
Proposition~\ref{trinomial.thm}.  
For integers $n, r$ and a non-negative integer $m$, the \emph{Fuss--Catalan
number} is\footnote{The displayed equation is the standard presentation.  The edge case $mn + r = 0$ does not occur in this paper.}
\[
  A_m(n,r) := \frac{r}{mn+r} \binom{mn+r}{m} \quad \in \Z,
\]
and the associated power series is
\begin{equation} \label{B.def}
  \B_{n,r}(z) := \sum_{m \ge 0} A_m(n,r) z^m \quad \in \Z[[z]],
\end{equation}
with constant term $\B_{n,r}(0) = A_0(n,r) = 1$.  The special case $\B_n(z) :=
\B_{n,1}(z)$ is the \emph{generalized binomial series} of
\cite[\S5.4]{Concrete}.  For example, $A_m(0,r)$ is the binomial coefficient
$\binom{r}{m}$, the numbers $A_m(2,1)$ are the Catalan numbers, and the numbers
$A_m(3,1)$ --- which we use most --- form Sequence A001764 in
OEIS~\cite{OEIS}.

We will use two formal identities.  The first is the defining functional
equation of $\B_n$,
\begin{equation} \label{B.functional}
  z\,\B_n(z)^n = \B_n(z) - 1,
\end{equation}
recorded as \cite[p.~200, (5.59)]{Concrete}; equivalently $\B_n(z) = 1 + z\,
\B_n(z)^n$, so $\B_n$ is the unique power series with constant term $1$
satisfying this equation.  The second expresses the central binomial-type
series $\sum_m \binom{nm}{m} z^m$ through $\B_n$.  By 
\cite[(5.61)]{Concrete}:
\begin{equation} \label{B.central}
  \sum_{m \ge 0} \binom{nm}{m} z^m = \frac{\B_n(z)}{\,n - (n-1)\B_n(z)\,}.
\end{equation}
We need \eqref{B.central} only for $n = 3$, where it reads $\sum_m
\binom{3m}{m} z^m = \B_3/(3 - 2\B_3)$.

\begin{rmk} \label{companion.B3}
The series $\B_3$ is the ternary generating function $C$ of 
\cite{LongRoot}: a ternary tree is either a leaf or a root carrying three
ordered ternary subtrees, so the series $C$ counting such trees by internal
nodes satisfies $C = 1 + zC^3$, which is \eqref{B.functional} for $n = 3$.
That paper develops the real-analytic side of the same series.  Here we use only
its formal algebraic properties, which are valid over any field.
\end{rmk}

\section{The discriminant root (\texorpdfstring{$\car \ne 3$}{char not 3})} \label{n3.sec}

We present the (surprisingly short and non-computational) proof of
Theorem~\ref{n3.soln}, after two remarks placing it in context.

First, while the theorem is stated for depressed cubics, it does provide a
formula also for a root of a not-necessarily-depressed cubic $f(t) = t^3 +
c_1t^2 + c_2 t + c_3$.  Specifically, one obtains a depressed associated cubic
as in \eqref{depressed}, call it $g(t)$, with $p$ and $q$ as in \eqref{pq.def}.
Supposing $p, q \ne 0$, the theorem says: \emph{If the discriminant series \eqref{disc.series}
converges to $\limit$, then
\begin{equation} \label{n3.nondepressed}
-\frac{c_1}{3} + \frac{3q}{p} \limit
\end{equation}
is a root of $f$.}

Second, as noted in the introduction, the theorem answers Serre's question: in
the generic case of Definition \ref{generic.case} one has $\abs{\Delta} < 1$ and
$\abs{p} = 1$, so the terms of \eqref{disc.series} tend to zero and the series
converges unconditionally.

\begin{proof}[Proof of Theorem \ref{n3.soln}]
The discriminant series is $s(z)\vert_{z = -\Delta/(27p^3)}$ for the formal
power series
\[
  s(z) := \sum_{n \ge 0} \binom{3n}{n} z^n \quad \in \Z[[z]].
\]
We claim that $s$ satisfies the identity
\begin{equation} \label{binom3.id}
  (4 - 27z)\, s(z)^3 = 1 + 3 s(z)
\end{equation}
in $\Z[[z]]$.  By \eqref{B.central} with $n = 3$ we have $s = \B_3/(3 - 2\B_3)$,
and the functional equation \eqref{B.functional} for $n = 3$ gives $z =
(\B_3 - 1)/\B_3^3$.  Substituting these two expressions, both sides of
\eqref{binom3.id} become the same rational function of $\B_3$; clearing the
common denominator, their difference is the zero polynomial in $\B_3$, so
\eqref{binom3.id} holds.\footnote{For complementary derivations see
\cite[Lemma~11]{LongRoot} (differentiating \eqref{B.functional}) and
\cite[p.~192, (6.21)]{Stanley2}.  The identity also appears in OEIS~\cite{OEIS},
relating Sequence A378483 (coefficients of $s(z)^3$) to A005809 (coefficients of
$s(z)$).}

Therefore, for $d := -\Delta/(27p^3)$, by Proposition \ref{formal} and the
unconditional convergence to $\limit$, we have
\[
(4 - 27d) \limit^3 = 1 + 3\limit
\]
in $C$.  Continuing to compute in $C$ we have
\[
p^3(4 - 27d) = 4p^3 + \Delta = -27q^2 \ne 0,
\]
so
\begin{equation} \label{n3.soln.1}
\limit^3 = (1 + 3\limit) / (4 - 27d).
\end{equation}
Applying \eqref{n3.soln.1} gives
\[
f(\rt) = 27q^3 \limit^3/p^3 + 3 q\limit + q
= \frac{(1+3\limit) q ((4 - 27d) p^3 + 27 q^2)}{(4-27d)p^3}.
\]
Since $(4-27d)p^3 + 27q^2 = 0$, $f(\rt) = 0$.
\end{proof}

\section{Characteristic 3} \label{deg3no.sec}

In this section, we consider fields of characteristic 3, where the
discriminant of a cubic polynomial $f(t) := t^3 + c_1 t^2 + c_2 t + c_3$ is
\begin{equation} \label{3.disc}
\Delta = c_1^2 c_2^2  - c_1^3 c_3 - c_2^3.
\end{equation}
We first complete the proof of \eqref{serre.cubic}, which takes place in the
generic case of \autoref{why.n3.sec}.  We have already proved the claim when
$\car k \ne 3$, so only the case of characteristic 3 remains.

\begin{proof}[Proof of \eqref{serre.cubic} when $\car k = 3$]
Maintain the definition of $C$ and $A$ from \autoref{why.n3.sec} and suppose
now that $\car k = 3$ and $\pi = \Delta$.

For $\kappa := A/(\pi)$, the image $\fb \in \kappa[t]$ of $f$ is a cubic
polynomial whose discriminant is zero.  Setting $u := \bc_2/\bc_1$, we find
\[
-\bc_1^3 \fb(u) = \bar{\Delta} = 0 \quad \text{and} \quad \fb'(u) = 3\bc_2 = 0
\]
in $\kappa$, so $u$ is a repeated root of $\fb$.  Since the three roots of
$\fb$ sum to $-\bc_1$, we find that $\bc_2/\bc_1 - \bc_1$ is a root distinct
from $u$.  In summary,
\begin{equation} \label{3.factor}
\fb(t) = (t - (\bc_2/\bc_1 - \bc_1))(t - \bc_2/\bc_1)^2 \quad \in \kappa[t].
\end{equation}

By Hensel's Lemma, the simple root of $\fb$ in $\kappa$ lifts to a simple root
$\rt \in A$ of $f$ such that $\rt \equiv c_2/c_1 - c_1 \bmod{\pi}$.  We have
already argued in \autoref{why.n3.sec} that $f$ has at most one root in $C$, so
the proof of \eqref{serre.cubic} is complete when $\car k = 3$.
\end{proof}

The rest of this section answers Serre's question for characteristic 3.  Example \ref{3.eg} shows that we must work with a polynomial $t^3 + c_1 t^2 + c_2 t +
c_3$ having $c_1 \ne 0$.

\begin{thm} \label{3.soln}
Let $f(t) = t^3 + c_1 t^2 + c_2 t + c_3 \in C[t]$ for $C$ a field of
characteristic 3 with an absolute value, and suppose $c_1 \ne 0$.  If the
series
$\B_{3,-1}(\Delta/c_1^6)$
converges to $\limit \in C$, then
\[
f(\rt) = 0 \quad \text{for} \quad \rt = \frac{c_2}{c_1} - c_1\limit \quad \in C.
\]
\end{thm}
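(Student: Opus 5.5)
The plan is to reduce the claim to a formal identity for $\B_{3,-1}$ and then evaluate it using Proposition~\ref{formal}, as in the proof of Theorem~\ref{n3.soln}. Write $a = c_1$, $b = c_2$, $c = c_3$ and $u = b/a$. First I compute $f(u - ax)$ for an indeterminate $x$, using characteristic $3$: the Frobenius gives $(u-ax)^3 = u^3 - a^3x^3$, and $-2 = 1$. Expanding, and using $b = au$, the terms linear in $x$ cancel: $a(u-ax)^2$ contributes $a^2ux$ and $b(u-ax)$ contributes $-a^2ux$. This leaves
\[
f(u - ax) = -a^3(x^3 - x^2) + \bigl(u^3 - au^2 + c\bigr).
\]
Clearing denominators and comparing with \eqref{3.disc} gives $u^3 - au^2 + c = (b^3 - a^2b^2 + a^3c)/a^3 = -\Delta/a^3$. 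Hence
\[
f\Bigl(\frac{c_2}{c_1} - c_1 x\Bigr) = -c_1^3\Bigl(x^3 - x^2 + \frac{\Delta}{c_1^6}\Bigr).
\]
So it suffices to show that $\limit^3 - \limit^2 + d = 0$ for $d := \Delta/c_1^6$.

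Second, I establish the formal identity $\B_{3,-1}(z)^3 - \B_{3,-1}(z)^2 + z = 0$ in $\Z[[z]]$. The key input is $\B_{3,-1} = \B_3^{-1}$, which is the $r = -1$ case of the standard identity $\B_n(z)^r = \B_{n,r}(z)$ \cite[(5.60)]{Concrete}. This identity is not among those recorded in \autoref{gbs.sec}, so I would either cite it or derive it from \eqref{B.functional} by Lagrange inversion. Granting it, put $x = 1/\B_3$, which is a unit in $\Z[[z]]$ since $\B_3(0) = 1$. The functional equation \eqref{B.functional} gives $\B_3 - 1 = z\B_3^3$, and therefore
\[
x^3 - x^2 = \frac{1 - \B_3}{\B_3^3} = -z,
\]
which is exactly the required identity. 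Checking the citation, or carrying out the Lagrange-inversion derivation of $\B_{3,-1} = \B_3^{-1}$, is the only step requiring genuine input. I expect it to be the main, though mild, obstacle.

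Finally, I apply Proposition~\ref{formal} with $h(x_1,x_2) = x_1^3 - x_1^2 + x_2$, $a_1 = \B_{3,-1}$ and $a_2 = z$, evaluated at $z = d$. The absolute value is non-archimedean, since $\abs{n\cdot 1_C}$ takes only the values $0$ and $1$ in characteristic $3$. I may pass to the completion of $C$; there the convergent series $\B_{3,-1}(d)$ converges unconditionally, and its limit $\limit$ already lies in $C$. Proposition~\ref{formal} then gives $\limit^3 - \limit^2 + d = 0$. By the first step this yields $f(c_2/c_1 - c_1\limit) = 0$.
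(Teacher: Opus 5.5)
Your proposal is correct and is essentially the paper's proof. The paper sets $w := 1 - \lambda$, checks the characteristic-$3$ identity $c_1^3 f(\alpha) = c_1^6 w(1-w)^2 - \Delta$, and then combines $\B_{3,-1} = \B_3^{-1}$ \cite[(5.60)]{Concrete}, the functional equation \eqref{B.functional} and Proposition~\ref{formal} to get $w(1-w)^2 = \Delta/c_1^6$; this is exactly your relation $\lambda^3 - \lambda^2 + \Delta/c_1^6 = 0$ written in the variable $1-\lambda$. Your substitution computation and your formal check $(1-\B_3)/\B_3^3 = -z$ are both right, and note that the paper's intermediate expression with $\B_3(z)^2$ in the denominator should read $\B_3(z)^3$.
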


Although this expression for the root looks superficially different from the one
in Theorem \ref{n3.soln}, it can also be obtained by translating a
characteristic-zero generic cubic to depressed form, applying
Theorem~\ref{n3.soln}, and reducing modulo $3$; we give a shorter direct proof
below.  As in Theorem~\ref{n3.soln}, the
convergence hypothesis is automatically satisfied in the generic case, so the
theorem answers Serre's question in characteristic $3$; here we need not require
unconditional convergence, since every absolute value in characteristic $3$ is
non-archimedean.  In case $\Delta = 0$, the series converges to $1$, giving
$\rt = c_2/c_1 - c_1$, which agrees with \eqref{3.factor}.

\begin{proof}[Proof of Theorem \ref{3.soln}]
Define
\begin{equation} \label{3.soln.1}
w := 1 - \limit =  (c_1 \rt - c_2 + c_1^2)/c_1^2.
\end{equation}
We have
\begin{align*}
c_1^3 f(\rt) &= c_1^3 (\rt^3 + c_1 \rt^2 + c_2\rt + c_3)  \\
&= (c_1 \rt - c_2)^2 (c_1 \rt - c_2 + c_1^2) - \Delta \\
&= c_1^6 w (1-w)^2  - \Delta.
\end{align*}
By definition, $w = a(z)\vert_{z=\Delta/c_1^6}$ for $a(z) = 1 - \B_{3,-1}(z)$.
(We remark that the coefficients of $a(z)$ are Sequence A006013 in \cite{OEIS}.)
Since $\B_{3,-1}(z) = \B_3(z)^{-1}$ by \cite[(5.60)]{Concrete}, we have 
\[
a(z) (1-a(z))^2 = (1 - \B_3(z)^{-1})\B_3(z)^{-2} = \tfrac{\B_3(z) - 1}{\B_3(z)^2},
\]
which equals $z$ by the functional equation \eqref{B.functional}.  Therefore,
\[
w(1-w)^2 =  a(z) (1 - a(z))^2 \vert_{z = \Delta/c_1^6} = z\vert_{z = \Delta/c_1^6} = \Delta/c_1^6
\]
by Proposition \ref{formal}, and we conclude that $c_1^3 f(\rt) = 0$.
\end{proof}

\section{Root geometry over complete non-archimedean fields} \label{nonarch.sec}

The paper \cite{LongRoot} showed that over the real numbers the
discriminant root of a depressed cubic is the root with the greatest absolute value, and that
another root given by a series, the trinomial root (see Proposition \ref{trinomial.thm}), produces
 the one with the smallest absolute value, whenever there are unique roots with those properties.

In this section, we prove a version of that result for non-archimedean fields, Theorem \ref{dist.root.thm}.  Throughout, $C$ is complete with a non-archimedean absolute value, which extends uniquely to an algebraic closure; so the length $\abs{x}$ of a root and the distance $\abs{x - x'}$ between two roots are well defined, independent of the extension chosen.

\subsection*{The trinomial root}
A root of a trinomial equation $t^n + pt + q = 0$ was written as a power series
in $q^{n-1}/p^n$ already in the 1700s; see \cite[\S\S38--40]{Lambert},
\cite[No.~12]{Lagrange:res}; more modern sources include \cite{Sturmfels} and \cite[Th.~10]{WildRubine}.
In the notation of \autoref{gbs.sec}:

\begin{prop} \label{trinomial.thm}
Let $C$ be a field with an absolute value and suppose $p, q \in C$ with $p \ne
0$, and let $n \ge 2$.  If the series
\begin{equation} \label{trinomial.series}
 \B_n \left( \frac{q^{n-1}}{(-p)^n} \right)
\end{equation}
converges unconditionally to some $\limitt \in C$, then $\rtt^n + p\rtt + q = 0$
in $C$ for $\rtt = -q\limitt/p$.
\end{prop}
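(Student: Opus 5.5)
The plan is to follow the pattern of the proof of Theorem~\ref{3.soln}: reduce the claim to the functional equation \eqref{B.functional} of $\B_n$ and then evaluate with Proposition~\ref{formal}. Put
\[
d := \frac{q^{n-1}}{(-p)^n} \in C,
\]
so that $\limitt = \B_n(z)\vert_{z=d}$ by hypothesis. For $\rtt = -q\limitt/p$ I will compute $\rtt^n + p\rtt + q$ directly:
\[
\rtt^n + p\rtt + q = \frac{(-q)^n}{p^n}\limitt^n - q\limitt + q = q\left( \frac{(-1)^n q^{n-1}}{p^n}\limitt^n - \limitt + 1\right) = q\,\bigl(d\limitt^n - \limitt + 1\bigr),
\]
using $(-1)^n/p^n = 1/(-p)^n$.

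It therefore suffices to show $d\limitt^n = \limitt - 1$ in $C$. In $\Z[[z]]$, \eqref{B.functional} gives $z\B_n(z)^n - \B_n(z) + 1 = 0$. Apply Proposition~\ref{formal} with $h(x_1, x_2) = x_1 x_2^n - x_2 + 1$, $a_1(z) = z$ and $a_2(z) = \B_n(z)$. The series $a_1(d) = d$ is a finite sum, so it trivially converges unconditionally, and $a_2(d)$ converges unconditionally to $\limitt$ by hypothesis. The proposition then says that the series obtained by substituting $d$ into the formal series $h(a_1, a_2)$ converges unconditionally with sum $h(d, \limitt) = d\limitt^n - \limitt + 1$. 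But $h(a_1, a_2)$ is the zero series, so this sum is $0$. Hence $\rtt^n + p\rtt + q = q \cdot 0 = 0$.

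There is no real obstacle. The only points to check are the sign bookkeeping that converts $(-q/p)^n$ into $d$ times $q$, and the formal fact that the zero power series evaluates to $0$, which is immediate. The case $q = 0$ needs no separate treatment: then $d = 0$, $\limitt = 1$ and $\rtt = 0$, which is a root, and the general computation already covers it.
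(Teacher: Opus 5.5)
Your proof is correct and follows the paper's argument. The paper justifies the proposition in one line, by substituting the functional equation \eqref{B.functional} into $t^n + pt + q$ via Proposition~\ref{formal}. You do the same, with the reduction $\rtt^n + p\rtt + q = q\,(d\limitt^n - \limitt + 1)$ and the choice $h = x_1 x_2^n - x_2 + 1$ written out explicitly.
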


One way to see this is to substitute 
the functional equation \eqref{B.functional} into $t^n + pt + q$ via Proposition~\ref{formal}.  We call \eqref{trinomial.series} the \emph{trinomial series} and,
when it converges unconditionally, we call $\rtt$ the \emph{trinomial root}.

We remark that the trinomial series does not answer Serre's question in most
characteristics: for $t^3 + pt + q$ with $\car C \ne 2, 3$, neither $p$ nor $q$
is divisible by $\Delta$, so \eqref{trinomial.series} does not converge in the
generic case.  The exception is characteristic $2$, where $\Delta = q^2$ and
$\B_3(\Delta/p^3)$ converges and agrees with the discriminant root, as
$\binom{3n}{n} = (2n+1) A_n(3,1) \equiv A_n(3,1) \bmod 2$.

\subsection*{Subsets of \texorpdfstring{$C^2$}{C2}}
The trinomial series \eqref{trinomial.series} is $\B_3(w)$ at $w = q^2/(-p)^3$,
which converges where $\abs{q}^2 < \abs{p}^3$, i.e.\ on
\[
\Dtri := \{(p,q) \in C^2 : \abs{q}^2 < \abs{p}^3\};
\]
note $p \ne 0$ there.  The discriminant series \eqref{disc.series} is $\sum_n
\binom{3n}{n} z^n$ at $z = -\Delta/(27p^3)$, which by the same reasoning, with
$\Delta = -4p^3 - 27q^2$, converges on
\[
  \Ddisc := \{(p,q) \in C^2 : \abs{\Delta} < \abs{3p}^3\}.
  \]
(If $\car C = 3$, then $\Ddisc$ is empty and the previous sentence makes no claim.)

Every $(p,q) \in \Ddisc$ has $p \ne 0$, since $\abs{\Delta} < \abs{3p}^3$ fails
when $p = 0$.  It can also happen that $q = 0$, but only when $\abs{2} < 1$:
then $\Delta = -4p^3$ has $\abs{\Delta} = \abs{2}^2 \abs{p}^3 < \abs{p}^3 = \abs{3p}^3$, where the last equality is because $\abs{\ell} < 1$ for at most one prime $\ell$.  In that case the discriminant series converges and the prefactor $3q/p$ in Theorem \ref{n3.soln} vanishes, so we define the discriminant root to be 0. 
If $\abs{2} = 1$, then $\abs{\Delta} = \abs{p}^3 \ge \abs{3p}^3$ and $(p,0) \notin
\Ddisc$.  

\begin{lem}\label{converge.dom}
If $\abs{2} = 1$, then $\Ddisc$ and
$\Dtri$ are disjoint.  If $\abs{2} < 1$, then $\Ddisc = \Dtri$.
\end{lem}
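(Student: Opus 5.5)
Both claims come down to comparing $\abs{\Delta} = \abs{4p^3 + 27q^2}$ with $\abs{3p}^3$. We use the ultrametric inequality together with the fact, already noted above, that $\abs{\ell} < 1$ for at most one prime $\ell$. Note first that $C$ is non-archimedean throughout this section. So for each $(p,q)$ we compare the two terms $\abs{4p^3} = \abs{2}^2\abs{p}^3$ and $\abs{27q^2} = \abs{3}^3\abs{q}^2$.

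\emph{Case $\abs{2} < 1$.} Here $\abs{3} = 1$, so $\Ddisc = \{(p,q) : \abs{4p^3 + 27q^2} < \abs{p}^3\}$. We prove both inclusions.

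Suppose $(p,q) \in \Dtri$, so $\abs{q}^2 < \abs{p}^3$. Then
\[
\abs{27q^2} = \abs{q}^2 < \abs{p}^3
\quad\text{and}\quad
\abs{4p^3} < \abs{p}^3 .
\]
The ultrametric inequality then gives $\abs{\Delta} < \abs{p}^3$, i.e.\ $(p,q) \in \Ddisc$.

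Conversely, suppose $(p,q) \in \Ddisc$, so $\abs{\Delta} < \abs{p}^3$. Write $27q^2 = -\Delta - 4p^3$. Both terms on the right have absolute value $< \abs{p}^3$. Hence $\abs{q}^2 = \abs{27q^2} < \abs{p}^3$, i.e.\ $(p,q) \in \Dtri$. So $\Ddisc = \Dtri$.

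\emph{Case $\abs{2} = 1$.} Suppose $(p,q) \in \Dtri$. Then $p \ne 0$ and $\abs{q}^2 < \abs{p}^3$. Since $\abs{3} \le 1$,
\[
\abs{27q^2} \le \abs{q}^2 < \abs{p}^3 = \abs{4p^3}.
\]
The strict ultrametric inequality (the larger term dominates) gives $\abs{\Delta} = \abs{p}^3$. Since $\abs{3} \le 1$, this is $\ge \abs{3p}^3$, so $(p,q) \notin \Ddisc$. Hence the two sets are disjoint.

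The only step needing care is using the equality case $\abs{x+y} = \max\{\abs x,\abs y\}$ when $\abs x \ne \abs y$. In the second case we also need $\abs{2} = 1$ to force $\abs{4p^3} = \abs{p}^3$. In the first case we need $\abs{3} = 1$ so that $\abs{27q^2} = \abs{q}^2$. This also covers $\car C = 3$: there $\abs{2} = 1$ and $\Ddisc$ is empty, so disjointness is trivial. In characteristic $2$ we have $\abs{2} = 0 < 1$, and the argument goes through with $4 = 0$.
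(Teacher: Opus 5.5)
Your proof is correct and follows the paper's argument essentially line for line. In each case you compare $\abs{4p^3} = \abs{2}^2\abs{p}^3$ with $\abs{27q^2} = \abs{3}^3\abs{q}^2$ via the ultrametric inequality, using $\abs{3}=1$ when $\abs{2}<1$ and the strict-dominance equality $\abs{\Delta} = \abs{p}^3$ when $\abs{2}=1$; your side remarks on characteristics $2$ and $3$ are accurate but already covered by the general argument.
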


\begin{proof}
Suppose $\abs{2} = 1$ and $(p,q) \in \Dtri$,
so $\abs{q}^2 < \abs{p}^3$.  In $\Delta = -4p^3 - 27q^2$ we have $\abs{4p^3} =
\abs{p}^3$ (as $\abs{4} = 1$) and $\abs{27 q^2} = \abs{3}^3 \abs{q}^2 \le
\abs{q}^2 < \abs{p}^3$, so the first term dominates and $\abs{\Delta} =
\abs{p}^3 \ge \abs{3p}^3$.  Hence $(p,q) \notin \Ddisc$.

Now suppose $\abs{2} < 1$, so $\abs{4 p^3} = \abs{2}^2 \abs{p}^3 < \abs{p}^3$ and
$\abs{3p}^3 = \abs{p}^3$.  If $(p,q) \in \Ddisc$, then $\abs{\Delta} <
\abs{p}^3$, and from $-27 q^2 = \Delta + 4 p^3$ the ultrametric inequality gives
$\abs{q}^2 = \abs{27 q^2} = \abs{\Delta + 4p^3} < \abs{p}^3$, i.e.\ $(p,q) \in
\Dtri$.  Conversely, if $(p,q) \in \Dtri$, then $\abs{27 q^2} = \abs{q}^2 <
\abs{p}^3$, so both terms of $\Delta$ have absolute value $< \abs{p}^3$, whence
$\abs{\Delta} < \abs{p}^3 = \abs{3p}^3$, i.e.\ $(p,q) \in \Ddisc$.  Thus $\Ddisc
= \Dtri$.
\end{proof}

For $f(t) = t^3 + pt + q \in C[t]$, the roots $x_1, x_2, x_3$ sum to zero, so the
ultrametric inequality forbids a unique longest root: either all roots have the
same length, or two are longer than the third, which we call the \emph{shortest}
root.  The same holds for the sides of the triangle $x_1 - x_2$, $x_2 - x_3$,
$x_3 - x_1$, so either all distances are equal or there is one short side, whose
opposite vertex we call the \emph{isolated} root.  The following lemma, with no hypothesis on the residue
characteristic, underlies both these settings.

\begin{lem} \label{length.dichotomy}
Let $y_1, y_2, y_3 \in C$ with $y_1 + y_2 + y_3 = 0$, and set $P = y_1 y_2 + y_1
y_3 + y_2 y_3$ and $Q = -y_1 y_2 y_3$.  There is a unique shortest $y_i$ if and
only if $\abs{Q}^2 < \abs{P}^3$.  In that case its length is $\abs{Q}/\abs{P}$
and the other two have length $\abs{P}^{1/2}$; otherwise all three have length
$\abs{Q}^{1/3}$.
\end{lem}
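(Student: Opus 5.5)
We argue directly with the ultrametric inequality, ordering the roots by length. Relabel so that $\abs{y_1} \le \abs{y_2} \le \abs{y_3}$. Since $y_3 = -y_1 - y_2$, we have $\abs{y_3} \le \max\{\abs{y_1},\abs{y_2}\} = \abs{y_2}$, so $\abs{y_2} = \abs{y_3} =: b$; put $a := \abs{y_1} \le b$. This is the dichotomy already noted before the lemma: either all lengths are equal ($a = b$), or there is a unique shortest element $y_1$ ($a < b$). The degenerate case $b = 0$ (all $y_i = 0$, so $P = Q = 0$) is consistent with the claims, since $\abs{Q}^2 < \abs{P}^3$ fails and all lengths equal $0 = \abs{Q}^{1/3}$. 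So assume $b > 0$.

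Next we compute $\abs{Q}$ and estimate $\abs{P}$ in terms of $a,b$. Clearly $\abs{Q} = a b^2$. For $P$, use $y_1 + y_2 + y_3 = 0$ to rewrite $P = y_2 y_3 + y_1(y_2 + y_3) = y_2 y_3 - y_1^2$. The first term has length $b^2$ and the second $a^2$.

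If $a < b$, the first term dominates, so $\abs{P} = b^2$. Then $\abs{Q}^2 = a^2 b^4 < b^6 = \abs{P}^3$. The shortest root has length $a = \abs{Q}/b^2 = \abs{Q}/\abs{P}$, and the others have length $b = \abs{P}^{1/2}$, as claimed.

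If $a = b$, the ultrametric inequality gives only $\abs{P} \le b^2$. Hence $\abs{P}^3 \le b^6 = \abs{Q}^2$, so the strict inequality $\abs{Q}^2 < \abs{P}^3$ fails. All three lengths are $b = (b^3)^{1/3} = \abs{Q}^{1/3}$.

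Since the two cases $a < b$ and $a = b$ are exhaustive and give opposite truth values for $\abs{Q}^2 < \abs{P}^3$, the ``if and only if'' follows. The only mildly delicate point is the case $a = b$, where cancellation in $P$ can make $\abs{P}$ small. The argument above is arranged so that only the upper bound $\abs{P} \le b^2$ is needed there, so no control on the residue characteristic is required. The identity $P = y_2y_3 - y_1^2$ is what makes the case $a < b$ clean.
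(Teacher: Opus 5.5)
Your proof is correct and follows essentially the same route as the paper. Both order the $y_i$ by length, rewrite $P = y_2y_3 - y_1^2$ using $y_2+y_3=-y_1$, and split into the case of a strictly shorter element and the all-equal case, where only the bound $|P| \le L^2$ is needed. Your explicit check of the degenerate case $y_1=y_2=y_3=0$ (where $|Q|/|P|$ would be undefined) is a small addition that the paper leaves implicit.
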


\begin{proof}
Let $L = \max_i \abs{y_i}$.  If some $y_i$ is strictly shorter, say $\abs{y_1} <
\abs{y_2} = \abs{y_3} = L$, then $y_2 + y_3 = -y_1$ gives $P = y_2 y_3 + y_1(y_2
+ y_3) = y_2 y_3 - y_1^2$, whence $\abs{P} = L^2$ and $\abs{Q} = \abs{y_1} L^2$;
thus $\abs{Q}/\abs{P} =\abs{y_1} < L = \abs{P}^{1/2}$ and $\abs{Q}^2 =
\abs{y_1}^2 \abs{P}^2 < \abs{P}^3$.  If instead all lengths equal $L$, then
$\abs{Q} = L^3$ and $\abs{P} \le L^2$, so $\abs{Q}^2 \ge \abs{P}^3$.  Hence a
unique shortest $y_i$ exists if and only if $\abs{Q}^2 < \abs{P}^3$, with the
stated lengths; otherwise all three lengths equal $\abs{Q}^{1/3}$.
\end{proof}

\begin{thm}\label{dist.root.thm}
Assume $\abs{3} = 1$, and let $f(t) = t^3 + pt + q \in C[t]$.
\begin{enumerate}[(a)]
\item \label{dist.short} The polynomial has a shortest root if and only if
$(p,q) \in \Dtri$.  In that case the shortest root is the trinomial root, of
length $\abs{q}/\abs{p} < \abs{p}^{1/2}$, while the other two roots have length
$\abs{p}^{1/2}$.
\item \label{dist.nolen} If there is no shortest root, then all three roots have
the common length $\abs{q}^{1/3}$.
\item \label{dist.isolated} The polynomial has an isolated root if and only if
$(p,q) \in \Ddisc$.  In that case the isolated root is the discriminant root; it
lies at distance $\abs{p}^{1/2}$ from each of the other two, which are at mutual
distance $\abs{\Delta}^{1/2}/\abs{p} < \abs{p}^{1/2}$.
\item \label{dist.none} If there is no isolated root, then the three pairwise
distances are equal, to $\abs{p}^{1/2}$ if $(p,q) \in \Dtri$ and to
$\abs{q}^{1/3}$ otherwise.
\item \label{dist.exclusive} If $\abs{2} = 1$, then the polynomial can have an
isolated root or a shortest root, but not both.
\item \label{dist.coincide} If $\abs{2} < 1$, then a shortest root exists if and only if an isolated root exists.  In that case, the shortest root, isolated root, discriminant root, and trinomial root are all equal.
\end{enumerate}
\end{thm}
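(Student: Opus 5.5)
The plan is to prove (a) and (b) first from Lemma~\ref{length.dichotomy}, then derive (c) and (d) by applying the same lemma to the differences of roots, and finally read off (e) and (f) from Lemma~\ref{converge.dom}.

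\emph{Parts (a) and (b).} Apply Lemma~\ref{length.dichotomy} to the roots $y_i = x_i$ in a splitting field; the absolute value extends uniquely since $C$ is complete. Here $P = p$ and $Q = q$. The lemma then says directly that a unique shortest root exists iff $\abs{q}^2 < \abs{p}^3$, i.e.\ $(p,q) \in \Dtri$. It also gives the lengths: $\abs{q}/\abs{p}$ for the shortest root and $\abs{p}^{1/2}$ for the other two, or the common value $\abs{q}^{1/3}$ when there is no shortest root. This is (b).

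It remains to identify the shortest root with the trinomial root $\rtt$. On $\Dtri$ the series $\B_3(q^2/(-p)^3)$ has terms tending to $0$, because $\abs{A_m(3,1)} \le 1$, so it converges. Then Proposition~\ref{trinomial.thm} gives $\rtt = -q\limitt/p$, a root in $C$. Since $\abs{\limitt - 1} < 1$, we have $\abs{\rtt} = \abs{q}/\abs{p}$, which is strictly smaller than $\abs{p}^{1/2}$. Only the shortest root has this length, so $\rtt$ is the shortest root.

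\emph{Parts (c) and (d).} Treat the differences of roots the same way. Set $y_1 = x_2 - x_3$, $y_2 = x_3 - x_1$, $y_3 = x_1 - x_2$; these sum to zero. Compute their symmetric functions:
\begin{itemize}
\item $Q' = -y_1y_2y_3$ satisfies $Q'^2 = -\Delta$.
\item $P' = \sum y_iy_j = -\tfrac12\sum y_i^2$. Using $\sum x_i = 0$ one finds $\sum y_i^2 = -6p$, so $P' = 3p$.
\end{itemize}
Lemma~\ref{length.dichotomy} then says a unique short side exists iff $\abs{\Delta} < \abs{3p}^3$, i.e.\ $(p,q) \in \Ddisc$. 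In that case the short side has length $\abs{\Delta}^{1/2}/\abs{3p} = \abs{\Delta}^{1/2}/\abs{p}$, using $\abs{3} = 1$, and the two long sides have length $\abs{p}^{1/2}$. If there is no short side, all sides have length $\abs{\Delta}^{1/6}$.

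To match (d), evaluate $\abs{\Delta}$ outside $\Ddisc$.
\begin{itemize}
\item If $(p,q) \in \Dtri$: when $\abs{2} = 1$, the proof of Lemma~\ref{converge.dom} gives $\abs{\Delta} = \abs{p}^3$, so the sides have length $\abs{p}^{1/2}$. When $\abs{2} < 1$, $\Dtri = \Ddisc$, so this case does not arise.
\item If $(p,q) \notin \Dtri$: by (b) all roots have length $\abs{q}^{1/3}$, so all distances are at most $\abs{q}^{1/3}$. Conversely $\abs{\Delta} = \prod \abs{y_i}^2 \le \abs{q}^2$. To get equality I would compare the terms of $\Delta = -4p^3 - 27q^2$, using $\abs{q}^2 \ge \abs{p}^3$ and $\abs{27} = 1$. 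If $\abs{2} = 1$ and $\abs{q}^2 = \abs{p}^3$, cancellation is possible. But then $\abs{\Delta} < \abs{p}^3 = \abs{3p}^3$, which puts $(p,q) \in \Ddisc$, outside the case of (d). So in the case of (d), $\abs{\Delta} = \abs{q}^2$ and the distance is $\abs{q}^{1/3}$.
\end{itemize}
This case bookkeeping is fiddly but routine.

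The main obstacle is showing that the isolated root equals the discriminant root $\rt$. Theorem~\ref{n3.soln} already gives $\rt = \frac{3q}{p}\limit$ as a root, with $\abs{\limit - 1} < 1$ on $\Ddisc$; the case $q = 0$ is handled separately. The isolated root is the unique root $x_i$ with $\abs{x_i - x_j} = \abs{p}^{1/2}$ for both $j \ne i$, and the cleanest test for this is the derivative. We have $f'(x_i) = \prod_{j\ne i}(x_i - x_j)$, so the isolated root has $\abs{f'} = \abs{p}$. Each non-isolated root has $\abs{f'} = \abs{p}^{1/2}\cdot\abs{\Delta}^{1/2}/\abs{p}$, which is strictly smaller than $\abs{p}$. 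So it suffices to show $\abs{f'(\rt)} = \abs{3\rt^2 + p} = \abs{p}$. Write $3\rt^2 + p = p\bigl(27q^2\limit^2/p^3 + 1\bigr)$. Since $27q^2/p^3 = -4 - \Delta/p^3$, we get
\[
27q^2\limit^2/p^3 + 1 \equiv -4 + 1 = -3
\]
modulo the open unit disk. Here $\abs{\Delta/p^3} < 1$ by the definition of $\Ddisc$ together with $\abs{3} = 1$. Since $\abs{3} = 1$, the product has absolute value $\abs{p}$, as needed. This argument also covers $q = 0$ with $\abs{2} < 1$, where $\rt = 0$ and $f'(0) = p$.

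\emph{Parts (e) and (f).} These follow by combining (a) and (c) with Lemma~\ref{converge.dom}. If $\abs{2} = 1$, then $\Ddisc$ and $\Dtri$ are disjoint, so a shortest root and an isolated root cannot coexist, which is (e). If $\abs{2} < 1$, then $\Ddisc = \Dtri$, so a shortest root exists iff an isolated root exists. In that case, the roots other than the shortest root $\rtt$ are at mutual distance $\abs{\Delta}^{1/2}/\abs{p}$, while $\rtt$ is at distance $\abs{p}^{1/2}$ from them, since their length is $\abs{p}^{1/2}$ and $\abs{\rtt} < \abs{p}^{1/2}$. Hence $\rtt$ is the isolated root, and by (a) and (c) all four roots named in (f) coincide.
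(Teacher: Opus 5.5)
Your proof is correct, but it identifies the discriminant root with the isolated root by a different argument from the paper's. For (a), (b), (e) and the distances in (c) via Lemma~\ref{length.dichotomy} applied to the differences, you follow the paper.

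\textbf{The paper's route.} It works by cases:
\begin{itemize}
\item when $\abs{2}<1$, it uses $\abs{\rt}=\abs{q}/\abs{p}$ to see that $\rt$ is the shortest root, then applies the coincidence computation $x_2-x_3=2x_2+x_1$;
\item when $\abs{2}=1$, it shows $\abs{\rt-3q/p}<\abs{q}^{1/3}$ and derives $\abs{p^3+27q^2}<\abs{p}^3$ from the assumption that $\rt$ lies in the close pair, a contradiction;
\item $q=0$ is treated separately.
\end{itemize}

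\textbf{Your route.} You characterize the isolated root intrinsically as the unique root $x$ with $\abs{f'(x)}=\abs{p}$; the other two have $\abs{f'}=\abs{\Delta}^{1/2}/\abs{p}^{1/2}<\abs{p}$. You then check that $f'(\rt)=p\bigl(1+27q^2\limit^2/p^3\bigr)$ satisfies $\abs{f'(\rt)+3p}<\abs{p}$. This is uniform in $\abs{2}$ and covers $q=0$ at no extra cost. It is essentially a quantitative version of the Hensel ``simple root'' picture of Section~\ref{why.n3.sec}. The paper's version stays with root lengths and distances, at the price of casework.

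\textbf{Smaller deviations.}
\begin{itemize}
\item In (d) off $\Dtri$, you get $\abs{\Delta}=\abs{q}^2$ directly from $\Delta=-4p^3-27q^2$ together with $(p,q)\notin\Ddisc$, instead of the paper's $-3x_2$ argument. This is sound.
\item In (f), you find the short side by elimination using (c). Say it in that order: both sides at $\rtt$ have length $\abs{p}^{1/2}$, and (c) gives exactly one shorter side, so it is the side opposite $\rtt$. As written, you assert the mutual distance of the other two roots before justifying it.
\end{itemize}

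\textbf{Two harmless slips.}
\begin{itemize}
\item With your ordering $y_1y_2y_3=-\delta$, so $Q'=\delta$ and $Q'^2=\Delta$, not $-\Delta$.
\item The step $P'=-\tfrac12\sum y_i^2$ should be read as the integer identity $e_2(y)=3e_2(x)-e_1(x)^2$, so that it also covers characteristic~$2$.
\end{itemize}
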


The hypothesis $\abs{3} = 1$ is necessary, see Example~\ref{bad.eg}.

For Serre's question, meaning $C$ as in \autoref{why.n3.sec}, $(p, q)$ from
\eqref{pq.def} is in $\Ddisc$, so the discriminant root is the isolated root.

\begin{proof}
\emph{Length dichotomy \textup{(\ref{dist.short}, \ref{dist.nolen})}.}
Apply Lemma~\ref{length.dichotomy} to the roots $x_1, x_2, x_3$, for which $P =
p$ and $Q = q$: a shortest root exists if and only if $\abs{q}^2 < \abs{p}^3$,
i.e.\ $(p,q) \in \Dtri$, in which case it has length $\abs{q}/\abs{p}$ and the
other two have length $\abs{p}^{1/2}$; otherwise all three have length
$\abs{q}^{1/3}$.  This proves \ref{dist.short} and~\ref{dist.nolen}, apart from
identifying the shortest root.

\medskip
\emph{Distance dichotomy \textup{(\ref{dist.isolated}, \ref{dist.none})}.}
The differences $x_1 - x_2$, $x_2 - x_3$, $x_3 - x_1$ also sum to zero; their
second elementary symmetric function is $3p$ and their product is $\pm\delta$, of
length $\abs{\Delta}^{1/2}$.  Applying Lemma~\ref{length.dichotomy} to them, with
$P = 3p$ and $Q = \delta$, there is a strictly shortest difference ---
equivalently, an isolated root --- if and only if $\abs{\delta}^2 < \abs{3p}^3$,
that is $\abs{\Delta} < \abs{3p}^3$, i.e.\ $(p,q) \in \Ddisc$.  In that case the
short side has length $\abs{\delta}/\abs{3p} = \abs{\Delta}^{1/2}/\abs{p}$ (using
$\abs{3} = 1$) and the other two have length $\abs{3p}^{1/2} = \abs{p}^{1/2}$,
giving the distances in~\ref{dist.isolated}.  When there is no isolated root the
three differences are equal-length, each of length $\abs{\delta}^{1/3} =
\abs{\Delta}^{1/6}$, so it remains to evaluate $\abs{\Delta}$.  If $(p,q) \in
\Dtri$, then $\abs{2} = 1$ (else Lemma~\ref{converge.dom} would put $(p,q) \in
\Ddisc$, giving an isolated root), so $-4p^3$ dominates $-27q^2$ in $\Delta$ and
$\abs{\Delta} = \abs{p}^3$, giving common distance $\abs{p}^{1/2}$.  If $(p,q)
\notin \Dtri$, the three roots have common length $\abs{q}^{1/3}$
by~\ref{dist.nolen}.  Since $x_1 + x_2 + x_3 = 0$ with all $\abs{x_i} =
\abs{q}^{1/3}$, some difference has length $\abs{q}^{1/3}$: if all three were
strictly shorter, then $x_1 - x_2$ and $x_2 - x_3$ would be, forcing $-3x_2 =
(x_1 - x_2) - (x_2 - x_3) - (x_1 + x_2 + x_3)$ to have length $< \abs{q}^{1/3}$,
a contradiction.  So the longest of the
three equal-length differences has length $\abs{q}^{1/3}$, and the common
distance is $\abs{q}^{1/3}$.  This proves~\ref{dist.none}.

\medskip
\emph{Exclusivity \textup{(\ref{dist.exclusive})}.}
If $|2| = 1$, then $\Dtri$ and $\Ddisc$ are disjoint by
Lemma~\ref{converge.dom}, so by \ref{dist.short} and \ref{dist.isolated} the
polynomial cannot have both a shortest root and an isolated root.

\medskip
\emph{Coincidence \textup{(\ref{dist.coincide})}.}
Suppose $|2| < 1$, so $\Ddisc = \Dtri$ by Lemma~\ref{converge.dom}; thus a
shortest root exists if and only if an isolated root exists.  When they do, let
$x_1$ be the shortest root and $x_2, x_3$ the other two, of length $L =
|p|^{1/2}$.  From $x_2 - x_3 = 2x_2 + x_1$, with $|2x_2| = |2|\,L < L$ and $|x_1|
< L$, we get $|x_2 - x_3| < L = |x_1 - x_2| = |x_1 - x_3|$, so $x_2 - x_3$ is the
short side and $x_1$ is the isolated root: the shortest and isolated roots
coincide.  

\medskip
\emph{Root identifications.}
On $\Dtri$ the trinomial series converges to $\limitt$ with $|\limitt| = 1$
(constant term $1$, all later terms of length $< 1$), so the trinomial root
$\rtt = -q\limitt/p$ has $|\rtt| = |q|/|p|$ and is the shortest root, proving the
identification in~\ref{dist.short}.

On $\Ddisc$, if $q = 0$ then the discriminant root is $0$ and it is the isolated root, because $f(t) = t(t+p)^2$.  So assume $q \ne 0$.
The discriminant series converges to $\limit$ with $|\limit - 1| \leq
|\Delta|/|p|^3 < 1$, so $|\limit| = 1$ and the discriminant root $\rt :=
3q\limit/p$ has $|\rt| = |q|/|p|$ and satisfies $f(\rt) = 0$ by
Theorem~\ref{n3.soln}.  We identify $\rt$ as the isolated root in two cases.

If $|2| < 1$, then $\Ddisc = \Dtri$ by Lemma~\ref{converge.dom}, so the shortest
root exists and is the unique root of length $|q|/|p|$.  As $|\rt| = |q|/|p|$,
this root is $\rt$, which is the isolated root by the Coincidence step above.

If $|2| = 1$, then there is no shortest root (as $\Dtri$ and $\Ddisc$ are
disjoint), so all three roots have the common length $L = |q|^{1/3}$
by~\ref{dist.nolen}.  Because $\abs{\Delta} \le \abs{3p}^2$, $\abs{q}^2 = \abs{p}^3$, so $|\rt - 3q/p| \le
|\Delta|/|p|^{5/2} < L$.  Let $y$ be the isolated root and $\{z, z'\}$ the close
pair, so $|z - z'| < L$.  Suppose $\rt = z$.  Then $z'$ lies within $|z - z'| <
L$ of $z = \rt$, which itself lies within $L$ of $3q/p$, so $|z' - 3q/p| < L$
as well; the third root is $y = -z - z' = -6q/p + w$ with $|w| < L$.
Substituting into $p = zz' + y(z + z')$ gives
$|p^3 + 27q^2| < |p|^3$.  But $p^3 + 27q^2 = -\Delta - 3p^3$ has length exactly
$|p|^3$, since $|\Delta| < |p|^3 = |3p^3|$ --- a contradiction.  Hence $\rt = y$,
proving~\ref{dist.isolated}.
\end{proof}

\begin{eg}\label{bad.eg}
Let $C$ be a field with a non-archimedean absolute value that is non-trivial,
i.e., such that there is some $c \in C$ with $0 < \abs{c} < 1$.  Let $f$ be the
depressed cubic with roots
\[
  x_1 = 1, \qquad x_2 = 1 + c, \qquad x_3 = -2 - c.
\]
Their pairwise differences are $x_1 - x_2 = -c$, $\ x_1 - x_3
= 3 + c$, and $\ x_2 - x_3 = 3 + 2c$, and a computation gives
\[
  p = -c^2 - 3c - 3, \qquad q = (1+c)(2+c), \qquad
  \Delta = c^2 (3+c)^2 (3+2c)^2.
\]

Suppose first that $\abs{2} < 1$, so $\abs{3} = 1$, $\abs{p} = 1$, and
$\abs{\Delta} = \abs{c}^2 < 1$, whence $(p,q) \in \Ddisc = \Dtri$.  This
illustrates~\ref{dist.coincide}: here $x_3$ is at once the shortest root, of
length $\abs{2 + c} < 1 = \abs{x_1} = \abs{x_2}$, and the isolated root, since
the two longer roots are close, $\abs{x_1 - x_2} = \abs{c} < 1 = \abs{x_1 - x_3}
= \abs{x_2 - x_3}$.

The hypothesis $\abs{3} = 1$, on the other hand, cannot be dropped.
Take $C = \Q_3$ and $c = 9$, so $\abs{c} < \abs{3} < 1$.  Then $-3$ dominates $p
= -c^2 - 3c - 3$, giving $\abs{p} = \abs{3}$, while $\abs{q} = 1$ and
$\abs{\Delta} = \abs{c}^2\abs{3}^4 < \abs{3}^6 = \abs{3p}^3$, so $(p,q) \in
\Ddisc$.  Here $x_3$ is genuinely isolated, $\abs{x_1 - x_2} = \abs{c} < \abs{3}
= \abs{x_1 - x_3} = \abs{x_2 - x_3}$, yet the metric conclusions of
\ref{dist.isolated} all fail: for instance the three roots have length $1$, not
$\abs{p}^{1/2} = \abs{3}^{1/2}$.
\end{eg}

\section{Quartic polynomials}  \label{quartic.sec}
In this section, we calculate the ramified quadratic factor $r(t)$ of
Proposition~\ref{serre.prop} in case $n = 4$, see Theorem \ref{quartic.prop}
below. 
We assume $\car C \ne 2, 3$ throughout.

Substitute $t \mapsto t - c_1/4$ into the generic quartic $f(t)$ to obtain the
quartic
\begin{equation} \label{depressed.quartic}
g(t) = t^4 + ct^2 + dt + e \quad \in K[t]
\end{equation}
for some $c, d, e \in K$.  The polynomial $g$ is said to be depressed because
it has no $t^3$ term.  The quadratic factors of $f$ provided by Proposition
\ref{serre.prop} provide quadratic factors for $g$ and we abuse notation and
write $r(t)$, $u(t)$ in the remainder of this section for the quadratic factors
of $g$.  Our aim is to compute those factors.

We will repeatedly need to know that a given polynomial in $c, d, e$ is not
divisible by $\Delta$ and hence is a unit in $A$, where $A$ is the complete
local ring of \autoref{serre.obs} with $\pi = \Delta$.  A convenient bookkeeping
device is the weighted grading on $k[c, d, e]$ in which $c$, $d$, $e$ have
weights $2$, $3$, $4$.  In this grading $\Delta$ is homogeneous of weight $12$,
as one checks term by term from
\[
\Delta = 16c^4 e - 4c^3 d^2 - 128 c^2 e^2 + 144 c d^2 e - 27 d^4 + 256 e^3.
\]
Consequently a nonzero homogeneous polynomial of weight less than $12$ is not
divisible by $\Delta$ and is a unit in $A$.  For instance $c^2 + 12e$ has weight
$4$, and the numerators $9d^2 - 32ce$ and $8ce - 2c^3 - 9d^2$ appearing below
have weight $6$, so all are units in $A$.

To compute the factors of $g$, we use the classical fact (see \cite[Appendix,
\S15]{Chrystal} or \cite{Brookfield}) that a
factorization of a depressed quartic into two quadratics is provided by an
element of the field whose square is a root of the cubic resolvent
\begin{equation} \label{g3.def}
    g_3(t) := t^3 + 2ct^2 + (c^2 - 4e)t - d^2,
\end{equation}
which has the same discriminant as $g(t)$.

Write $r_1, r_2$ for the roots of $r(t)$ in some algebraic closure of $C$ and
$u_1, u_2$ for the roots of $u(t)$.  Since the two polynomials are irreducible
and $r(t) \ne u(t)$, the four roots $r_1, r_2, u_1, u_2$ are distinct.  One
checks using $r_1 + r_2 + u_1 + u_2 = 0$ that $g_3$ has roots $(r_1 + r_2)^2 =
(u_1 + u_2)^2$, $(r_1 + u_1)^2$, and $(r_1 + u_2)^2$.

\begin{lem} \label{quartic.roots}
Theorem \ref{n3.soln} provides a root $\rt_3$ of $g_3(t)$, and we have
\[
\rt_3 = (r_1 + r_2)^2  \equiv \dfrac{9d^2 -
32ce}{c^2 + 12e} \pmod I.
\]
\end{lem}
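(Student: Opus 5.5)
We apply Theorem~\ref{n3.soln} to the cubic resolvent $g_3$ after depressing it, and read off the constant term of the root from the first term of the discriminant series. We then identify the resulting root with $(r_1+r_2)^2$ by showing it is the only root of $g_3$ lying in $C$. We work in the generic case with $\pi = \Delta$, so $\abs{\Delta} < 1$, and every nonzero weighted-homogeneous polynomial in $c,d,e$ of weight less than $12$ is a unit of $A$.

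\emph{Step 1: depress $g_3$.} Substitute $t \mapsto t - 2c/3$, as in \eqref{depressed}, to get $t^3 + Pt + Q$ with
\[
P = (c^2 - 4e) - \tfrac{4c^2}{3} = -\tfrac{c^2+12e}{3}, \qquad
Q = \tfrac{16c^3}{27} - \tfrac{2c(c^2-4e)}{3} - d^2 = \tfrac{-2c^3 + 72ce - 27d^2}{27}.
\]
Both are nonzero and weighted-homogeneous of weights $4$ and $6$, so both are units in $A$. In particular $\abs{P} = 1$ and $P, Q \ne 0$. A translation does not change the discriminant, and the paper notes that $g_3$ has the same discriminant $\Delta$ as $g$. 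Hence $\abs{-\Delta/(27P^3)} = \abs{\Delta} < 1$, using $\abs{27} = 1$ since $\car k \ne 3$. The terms of \eqref{disc.series} therefore tend to $0$, and the series converges unconditionally to some $\limit$ with $\limit \equiv 1 \pmod I$.

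\emph{Step 2: the constant term.} Theorem~\ref{n3.soln} gives the root $\rt_3 = -2c/3 + 3Q\limit/P$ of $g_3$. Modulo $I$ this is
\[
\rt_3 \equiv -\frac{2c}{3} + \frac{2c^3 - 72ce + 27d^2}{3(c^2+12e)} = \frac{27d^2 - 96ce}{3(c^2+12e)} = \frac{9d^2 - 32ce}{c^2+12e},
\]
which is the stated congruence. This is routine algebra.

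\emph{Step 3: identify $\rt_3$ with $(r_1+r_2)^2$.} This identification is the only real content of the lemma.
\begin{enumerate}[(i)]
\item Since $r(t) \in A[t]$ is monic, $r_1 + r_2$ is minus its linear coefficient, so $(r_1+r_2)^2 \in A \subseteq C$ is a root of $g_3$ lying in $C$.
\item The roots of $g_3$ are distinct, because its discriminant $\Delta$ is nonzero.
\item $g_3$ has at most one root in $C$, by the argument of \autoref{why.n3.sec}. If two roots lay in $C$, the third would too, since the roots sum to $-2c$. Then $\delta$ for $g_3$, namely the product of the differences of its roots, would lie in $C$, making $\Delta$ a square in $C$. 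This is impossible because $\Delta$ is prime in $A$.
\end{enumerate}
Since $\rt_3 \in C$ is a root of $g_3$, it follows that $\rt_3 = (r_1+r_2)^2$.

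The step most likely to need care is (iii): it requires $\car k \ne 2$, so that $\pi = \Delta$ is itself prime, and this is assumed throughout this section.
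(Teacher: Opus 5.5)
Your proposal is correct and follows the paper's proof essentially step for step. You depress $g_3$ by $t \mapsto t - 2c/3$, use that $P$ and $Q$ are units of weights $4$ and $6$ to get convergence with $\limit \equiv 1 \pmod I$, and read off the congruence. You then identify $\rt_3$ with $(r_1+r_2)^2 \in A$ because $g_3$ has at most one root in $C$, its discriminant $\Delta$ not being a square. The extra details you supply, such as $\abs{27} = 1$ and the explicit simplification of the constant term, are accurate.
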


\begin{proof}
We first observe that the discriminant series converges.  Depressing $g_3$ via
the substitution $t = y - 2c/3$ gives $y^3 + Py + Q$ for $P = -(c^2 + 12e)/3$ and
$Q = (-2c^3 + 72ce - 27d^2)/27$, which are not zero.  Since $c^2 + 12e$ has
weight $4 < 12$, it is a unit in $A$, and $\abs{\Delta/(27P^3)} <
1$, so the discriminant series converges to some $\limit_3$ and Theorem \ref{n3.soln} via \eqref{n3.nondepressed}
provides the root $\rt_3 = -2c/3 + (3Q/P)\limit_3$.  Modulo $I$ the series
reduces to its constant term, $\limit_3 \equiv 1$, providing the claimed congruence.

The element $\rt_3$ is a root of $g_3$ in $C$, and it is the only root, for the
discriminant of $g_3$ equals the discriminant of $g$, which is not a square in
$C$. (This is the same argument as in \autoref{why.n3.sec}.)  Of the roots of $g_3$, $(r_1 + r_2)^2$ belongs to $A$ because it is the square of the coefficient of $t$ in $r(t)$.
Hence it equals $\rt_3$.
\end{proof}

\begin{qprop}[$\car C \ne 2,3$] \label{quartic.prop}
There is a unique $\rho \in A$ such that $\rho^2 = (\rt_3 + c)^2 - 4e$ and
\[
\rho \equiv \frac{8ce - 2c^3 - 9d^2}{2c^2 + 24e} \pmod I,
\]
and it is invertible.  For $s := d/\rho$, we have $s^2 = \rt_3$. The
ramified and unramified quadratic factors of the depressed generic quartic $g(t)$ are
\[
r(t) = t^2 - s t + \tfrac12 \left( \rt_3 + c + \rho \right)
\quad \text{and} \quad
u(t) = t^2 + st + \tfrac12 \left( \rt_3 + c - \rho \right).
\]
\end{qprop}

\begin{proof}
We claim that $\rho := r_1 r_2 - u_1
u_2 = r(0) - u(0)$, which lies in $A$, has the asserted properties.  First,
$\rho^2 = (\rt_3 + c)^2 - 4e$: indeed $(r_1 r_2)(u_1 u_2) = e$ and $r_1 r_2 + u_1
u_2 = \rt_3 + c$, the latter because $(r_1+r_2)(u_1+u_2) = -\rt_3$ by
Lemma~\ref{quartic.roots} and $r_1 r_2 + u_1 u_2 + (r_1+r_2)(u_1+u_2) = c$.

Next we compute $\rho \bmod I$.  Using Lemma~\ref{quartic.roots} and $r_1 + r_2 +
u_1 + u_2 = 0$,
\[
(r_1 - r_2)^2 = (r_1 + r_2)^2 - 4 r_1 r_2 = \rt_3 - 2(r_1 r_2 + u_1 u_2) - 2 \rho
= -\rt_3 - 2c - 2\rho.
\]
On the other hand,
\[
(r_1 - r_2)^2 = \frac{\Delta}{((r_1 - u_1)(r_2 - u_2)(r_1 - u_2)(r_2 - u_1)(u_1 - u_2))^2}
\]
belongs to $I$, the denominator being a unit because all differences involving
one $r$-root and one $u$-root, as well as $u_1 - u_2$, have absolute value $1$.
Hence $\rho \equiv \rho_0 \bmod I$ for $\rho_0 := -(\rt_3 + 2c)/2$.  Plugging in the value of $\rt_3$ from Lemma \ref{quartic.roots} gives the claimed formula for $\rho_0$.
Both
the numerator $8ce - 2c^3 - 9d^2$ (weight $6$) and denominator $2c^2 + 24e$
(weight $4$) of $\rho_0$ have weight $< 12$, so $\rho_0$ is a unit in $A$, hence also $\rho$ is.

For uniqueness, the polynomial $T^2 - ((\rt_3 + c)^2 - 4e)$ has the simple root
$\rho_0$ modulo $\pi$ (simple because $2\rho_0$ is a unit), so by Hensel's Lemma
it has a unique root in $A$ reducing to $\rho_0$, proving the claimed uniqueness of $\rho$.

Finally, $d = (r_1 + r_2)\rho$ because $r_1 + r_2 + u_1 + u_2 = 0$, so $s = d/\rho
= r_1 + r_2$ and $s^2 = \rt_3$ by Lemma~\ref{quartic.roots}.  The ramified factor is $r(t) = (t - r_1)(t - r_2)$ with constant term $\tfrac12(\rt_3 + c + \rho) = r_1 r_2$ and linear term $-(r_1 + r_2) = -s$.  The linear and constant terms of the unramified factor $u(t)$ are computed analogously.  Multiplying,
$r(t) u(t) = g(t)$. 
\end{proof}

We remark that $\rho$ can be computed explicitly using successive approximations $\rho_0, \rho_1, \ldots$ obtained via Newton's method:
$\rho_{i+1} = \tfrac12(\rho_i + ((\rt_3 + c)^2 -
4e)/\rho_i)$.

\begin{rmk}
The ramified and unramified quadratic factors can be told apart by the residues of their
discriminants: using $s^2 = \rt_3$,
\[
\operatorname{disc} r = s^2 - 2(\rt_3 + c + \rho) = -\rt_3 - 2c - 2\rho
\quad\text{and}\quad
\operatorname{disc} u = -\rt_3 - 2c + 2\rho.
\]
By the congruences above $\operatorname{disc} r \equiv 0 \pmod I$, so $\bar r$ is
a square and $r$ is ramified, whereas $\operatorname{disc} u \equiv 4\rho_0 \pmod
I$ is a unit, so $\bar u$ is separable and $u$ is unramified.  
\end{rmk}

\subsection*{Acknowledgements} We thank J-P. Serre for posing his question, which stimulated this paper.  ChatGPT 5.5 and Claude Opus 4.8 were used for referee-style feedback on earlier versions and to explore possible new results.  All arguments and calculations have been independently checked by the authors. 

\bibliographystyle{amsalpha}
\bibliography{skip_master,cubic}

\end{document}